\newtheorem{theorem}{Theorem}[section]
\newtheorem{lemma}[theorem]{Lemma}
\newtheorem{question}[theorem]{Question}
\theoremstyle{definition}
\newtheorem{definition}[theorem]{Definition}
\newtheorem{example}[theorem]{Example}
\theoremstyle{remark}
\newtheorem{remark}[theorem]{Remark}
\numberwithin{equation}{section}
\begin{document}


\title[Connectedness modulo a topological property]{Connectedness modulo a topological property}

\author{M.R. Koushesh}
\address{Department of Mathematical Sciences, Isfahan University of Technology, Isfahan 84156--83111, Iran}
\address{School of Mathematics, Institute for Research in Fundamental Sciences (IPM), P.O. Box: 19395--5746, Tehran, Iran}
\email{koushesh@cc.iut.ac.ir}
\thanks{This research was in part supported by a grant from IPM (No. 90030052).}

\subjclass[2010]{Primary 54D05, 54D35, 54D60, 54C10; Secondary 54D20, 54D40}


\keywords{Connectedness, Stone--\v{C}ech compactification, Hewitt realcompactification, hyper--real mapping, connectedness modulo a topological property.}

\begin{abstract}
Let ${\mathscr P}$ be a topological property. We say that a space $X$ is {\em ${\mathscr P}$--connected} if there exists no pair $C$ and $D$ of disjoint cozero--sets of $X$ with non--${\mathscr P}$ closure such that the remainder $X\backslash(C\cup D)$ is contained in a cozero--set of $X$ with ${\mathscr P}$ closure. If ${\mathscr P}$ is taken to be ``being empty" then ${\mathscr P}$--connectedness coincides with connectedness in its usual sense. We characterize completely regular ${\mathscr P}$--connected spaces, with ${\mathscr P}$ subject to some mild requirements. Then, we study conditions under which unions of ${\mathscr P}$--connected subspaces of a space are ${\mathscr P}$--connected. Also, we study classes of mappings which preserve ${\mathscr P}$--connectedness. We conclude with a detailed study of the special case in which ${\mathscr P}$ is pseudocompactness. In particular, when ${\mathscr P}$ is pseudocompactness, we prove that a completely regular space $X$ is ${\mathscr P}$--connected if and only if $\mbox{cl}_{\beta X}(\beta X\backslash\upsilon X)$ is connected, and that ${\mathscr P}$--connectedness is preserved under perfect open continuous surjections. We leave some problems open.
\end{abstract}

\maketitle


\section{Introduction}

Let ${\mathscr P}$ be a topological property. We say that a space $X$ is {\em ${\mathscr P}$--connected} if there exists no pair $C$ and $D$ of disjoint cozero--sets of $X$ with non--${\mathscr P}$ closure such that the remainder $X\backslash(C\cup D)$ is contained in a cozero--set of $X$ with ${\mathscr P}$ closure. If ${\mathscr P}$ is taken to be ``being empty" (i.e., a space has ${\mathscr P}$ if and only if it is empty) then ${\mathscr P}$--connectedness coincides with connectedness in its usual sense. Thus, ${\mathscr P}$--connectedness may be considered as a generalization of connectedness. Our purpose in this article is to study ${\mathscr P}$--connectedness and to see to what extent the standard theorems about connected spaces remain valid in this generalized context. We begin by characterizing completely regular ${\mathscr P}$--connected spaces. Here ${\mathscr P}$ is subject to some mild requirements and ranges over a wide class of topological properties, including almost all covering properties (i.e., topological properties described in terms of the existence of certain kinds of open subcovers or refinements of a given open cover of a certain type). In the special case in which ${\mathscr P}$ is compactness, it follows that a locally compact Hausdorff space $X$ is ${\mathscr P}$--connected if and only if $\beta X\backslash X$ is connected. (Thus, in particular, if ${\mathscr P}$ is compactness, the Euclidean space $\mathbb{R}^n$ is ${\mathscr P}$--connected if and only if $n\geq 2$.) Then, we study conditions under which unions of ${\mathscr P}$--connected subspaces of a given space are ${\mathscr P}$--connected. Also, we study classes of mappings which preserve ${\mathscr P}$--connectedness. We conclude our work with a detailed study of the special case in which ${\mathscr P}$ is pseudocompactness. In particular, in the case when ${\mathscr P}$ is pseudocompactness, we prove that a completely regular space $X$ is ${\mathscr P}$--connected if and only if $\mbox{cl}_{\beta X}(\beta X\backslash\upsilon X)$ is connected, and that ${\mathscr P}$--connectedness is preserved under perfect open continuous surjections. We do not know whether products of ${\mathscr P}$--connected spaces is ${\mathscr P}$--connected; we leave this as an open problem.

This work has been motivated by our previous work \cite{Ko3}. (See also \cite{Ko1}, \cite{Ko2} and \cite{Ko8}.)

We now review briefly some known facts and terminologies. Additional information may be found in \cite{E}, \cite{GJ} and \cite{PW}.

Let ${\mathscr P}$ be a topological property. Then
\begin{itemize}
  \item ${\mathscr P}$ is {\em closed hereditary}, if any closed subspace of a space with ${\mathscr P}$, also has ${\mathscr P}$.
  \item ${\mathscr P}$ is {\em preserved under finite} ({\em countable}, respectively) {\em sums of closed subspaces}, if any space which is expressible as a finite (countable, respectively) union of its closed subspaces each having ${\mathscr P}$, also has ${\mathscr P}$.
  \item ${\mathscr P}$ is {\em invariant under a class of mappings ${\mathscr M}$} ({\em  inverse invariant under a class of mappings ${\mathscr M}$}, respectively) if for any $f\in{\mathscr M}$, where $f:X\rightarrow Y$ is surjective, the space $Y$ ($X$, respectively) has ${\mathscr P}$ provided that $X$ ($Y$, respectively) has ${\mathscr P}$.
\end{itemize}

Let $X$ be a space and let ${\mathscr P}$ be a topological property. The space $X$ is called a {\em ${\mathscr P}$--space} if it has ${\mathscr P}$. A {\em ${\mathscr P}$--subspace} of $X$ is a subspace of $X$ which has ${\mathscr P}$. By a {\em ${\mathscr P}$--neighborhood} of a point in $X$  we mean a neighborhood of the point in $X$ having ${\mathscr P}$. The space $X$ is called {\em locally--${\mathscr P}$} if each of its points has a ${\mathscr P}$--neighborhood in $X$. Note that if $X$ is regular and ${\mathscr P}$ is closed hereditary, then $X$ is locally--${\mathscr P}$ if and only if each $x\in X$ has an open neighborhood $U$ in $X$ such that $\mbox{cl}_XU$ has ${\mathscr P}$.

Let $X$ and $Y$ be spaces. A mapping $f:X\rightarrow Y$ is called {\em perfect}, if $f$ is closed (not necessarily surjective) and continuous and any fiber $f^{-1}(y)$, where $y\in Y$, is a compact subspace of $X$.

Let $X$ be a space. A {\em zero--set} of $X$ is a set of the form $\mbox{Z}(f)=f^{-1}(0)$ where $f:X\rightarrow[0,1]$ is continuous. Any set of the form $X\backslash Z$, where $Z$ is a zero--set of $X$, is called a {\em cozero--set} of $X$. We denote the set of all zero--sets of $X$ by $\mbox{Z}(X)$ and the set of all cozero--sets of $X$ by $\mbox{Coz}(X)$.

Let $X$ be a completely regular space. The {\em Stone--\v{C}ech compactification} of $X$, denoted by $\beta X$, is a compactification of $X$ characterized among all compactifications of $X$ by either of the following properties:
\begin{itemize}
  \item Every continuous mapping from $X$ to $[0,1]$ is continuously extendible over $\beta X$.
  \item Every continuous mapping from $X$ to a compact space is continuously extendible over $\beta X$.
\end{itemize}
If $Y$ is a completely regular space and $f:X\rightarrow Y$ is continuous, then there exists a (unique) continuous extension of $f$ to a mapping $f_\beta:\beta X\rightarrow\beta Y$.

\section{${\mathscr P}$--connected spaces; the definition}\label{OJ}

\begin{definition}\label{RUA}
Let $X$ be a space and let ${\mathscr P}$ be a topological property. A {\em ${\mathscr P}$--separation} for $X$ is a pair $C,D\in\mbox{Coz}(X)$ such that
\begin{itemize}
  \item $C$ and $D$ are disjoint.
  \item $\mbox{cl}_XC$ and $\mbox{cl}_XD$ are both non--${\mathscr P}$.
  \item $X\backslash(C\cup D)\subseteq E$ for some $E\in\mbox{Coz}(X)$ such that $\mbox{cl}_XE$ has ${\mathscr P}$.
\end{itemize}
The space $X$ is said to be {\em ${\mathscr P}$--disconnected} (or {\em disconnected modulo ${\mathscr P}$}) if there exists a ${\mathscr P}$--separation for it. The space $X$ is said to be {\em ${\mathscr P}$--connected} (or {\em connected modulo ${\mathscr P}$}) if it is not ${\mathscr P}$--disconnected.
\end{definition}

Observe that if ${\mathscr P}$ is the topological property of {\em being empty} then ${\mathscr P}$--connectedness coincides with connectedness in the usual sense. Thus, the notion of ${\mathscr P}$--connectedness may be considered as a generalization of connectedness.

\section{Characterization of completely regular ${\mathscr P}$--connected spaces}

In this section we characterize completely regular spaces $X$ which are connected modulo a given topological property ${\mathscr P}$. The topological property ${\mathscr P}$ here is subject to some mild requirements. Examples of such topological properties ${\mathscr P}$ are given in Example \ref{QLL}. In the case when ${\mathscr P}$ is compactness and $X$ is locally compact the characterization simplifies. This simplification is given in Theorem \ref{FKJ}. (As a corollary, we have that $\mathbb{R}^n$ is ${\mathscr P}$--connected -- with ${\mathscr P}$ being compactness -- if and only if $n\geq 2$; see Example \ref{YJHHJ}.) Other simplification occurs if ${\mathscr P}$ is taken to be pseudocompactness (see Theorem \ref{UJU}); we postpone this, however, until Section \ref{SGFF}, in which we study this special case in great detail.

Part of the results of this section either are known or are modifications of known results (see \cite{Ko3}); this includes Lemmas \ref{B}, \ref{BA27}, \ref{j1} and \ref{HFH}; the proofs are included however, for completeness of results and the reader's convenience.

The following subspace $\lambda_{\mathscr P} X$ of $\beta X$, introduced in \cite{Ko3}, plays a crucial role in our study.

\begin{definition}\label{RRA}
For a completely regular space $X$ and a topological property ${\mathscr P}$, let
\[\lambda_{\mathscr P} X=\bigcup\big\{\mbox{int}_{\beta X} \mbox{cl}_{\beta X}C:C\in\mbox{Coz}(X)\mbox{ and }\mbox{cl}_XC\mbox{ has }{\mathscr P}\big\}.\]
\end{definition}

\begin{remark}
In Definition \ref{RRA} we have
\[\lambda_{\mathscr P} X=\bigcup\big\{\mbox{int}_{\beta X} \mbox{cl}_{\beta X}Z:Z\in \mbox{Z}(X)\mbox{ has }{\mathscr P}\big\},\]
provided that ${\mathscr P}$ is closed hereditary. (See \cite{Ko4}.)
\end{remark}

\begin{lemma}\label{B}
Let $X$ be a completely regular space and let ${\mathscr P}$ be a closed hereditary topological property preserved under finite sums of closed subspaces. For any subspace $A$ of $X$, if $\mbox{\em cl}_{\beta X} A\subseteq\lambda_{\mathscr P} X$, then $\mbox{\em cl}_XA$ has ${\mathscr P}$.
\end{lemma}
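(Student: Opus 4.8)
The plan is to use the compactness of $\mathrm{cl}_{\beta X}A$ together with the definition of $\lambda_{\mathscr P}X$ as a union of open sets. Since $\mathrm{cl}_{\beta X}A\subseteq\lambda_{\mathscr P}X=\bigcup\{\mathrm{int}_{\beta X}\mathrm{cl}_{\beta X}C:C\in\mathrm{Coz}(X),\ \mathrm{cl}_XC\text{ has }{\mathscr P}\}$, and $\mathrm{cl}_{\beta X}A$ is a closed subspace of the compact space $\beta X$, hence compact, finitely many such open sets $\mathrm{int}_{\beta X}\mathrm{cl}_{\beta X}C_1,\dots,\mathrm{int}_{\beta X}\mathrm{cl}_{\beta X}C_n$ cover $\mathrm{cl}_{\beta X}A$, where each $C_i\in\mathrm{Coz}(X)$ has $\mathrm{cl}_XC_i$ with ${\mathscr P}$.

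Next I would pass from the $\beta X$-level covering down to a covering of $\mathrm{cl}_XA$ inside $X$. From $\mathrm{cl}_{\beta X}A\subseteq\mathrm{cl}_{\beta X}C_1\cup\cdots\cup\mathrm{cl}_{\beta X}C_n$, intersecting with $X$ and using that $\mathrm{cl}_X(C_i)=X\cap\mathrm{cl}_{\beta X}(C_i)$ (since $X$ is dense in $\beta X$ and intersection with a dense subspace respects closures of subsets of $X$), I get $\mathrm{cl}_XA\subseteq\mathrm{cl}_XC_1\cup\cdots\cup\mathrm{cl}_XC_n$. Therefore $\mathrm{cl}_XA=(\mathrm{cl}_XA\cap\mathrm{cl}_XC_1)\cup\cdots\cup(\mathrm{cl}_XA\cap\mathrm{cl}_XC_n)$, a finite union. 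Each piece $\mathrm{cl}_XA\cap\mathrm{cl}_XC_i$ is a closed subspace of $\mathrm{cl}_XC_i$, which has ${\mathscr P}$; since ${\mathscr P}$ is closed hereditary, each piece has ${\mathscr P}$. Each piece is also closed in $\mathrm{cl}_XA$, so $\mathrm{cl}_XA$ is a finite union of its closed subspaces each having ${\mathscr P}$; since ${\mathscr P}$ is preserved under finite sums of closed subspaces, $\mathrm{cl}_XA$ has ${\mathscr P}$, as desired.

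The only subtle point — and the one I would state carefully — is the identity $\mathrm{cl}_X(C_i)=X\cap\mathrm{cl}_{\beta X}(C_i)$, which is the standard fact that for a dense subspace $X$ of a space $T$ and $S\subseteq X$ one has $\mathrm{cl}_XS=X\cap\mathrm{cl}_TS$; here $T=\beta X$. This is what lets the finite subcover obtained in $\beta X$ be transferred to a finite closed cover of $\mathrm{cl}_XA$ by sets with ${\mathscr P}$. Everything else is a routine application of compactness and the two hypotheses on ${\mathscr P}$; I do not anticipate any real obstacle.
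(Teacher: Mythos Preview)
Your proof is correct and follows essentially the same approach as the paper: extract a finite subcover by compactness of $\mathrm{cl}_{\beta X}A$, intersect with $X$, and conclude using the two hypotheses on ${\mathscr P}$. The only cosmetic difference is that the paper first shows the union $B=\mathrm{cl}_XC_1\cup\cdots\cup\mathrm{cl}_XC_n$ has ${\mathscr P}$ and then applies closed hereditariness to $\mathrm{cl}_XA\subseteq B$, whereas you apply the two properties in the opposite order; also, the identity $\mathrm{cl}_XS=X\cap\mathrm{cl}_{\beta X}S$ holds for any subspace $X$, so density is not actually needed there.
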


\begin{proof}
By compactness of $\mbox{cl}_{\beta X} A$ and the definition of $\lambda_{\mathscr P} X$ we have
\begin{equation}\label{OB}
\mbox{cl}_{\beta X} A\subseteq\mbox{int}_{\beta X}\mbox {cl}_{\beta X}C_1\cup\cdots\cup\mbox{int}_{\beta X}\mbox {cl}_{\beta X}C_n
\end{equation}
for some $C_1,\ldots,C_n\in\mbox{Coz}(X)$ such that each $\mbox{cl}_XC_1,\ldots,\mbox{cl}_XC_n$ has ${\mathscr P}$. Intersecting both sides of (\ref{OB}) with $X$ yields
\[\mbox{cl}_XA\subseteq\mbox{cl}_XC_1\cup\cdots\cup\mbox{cl}_XC_n=B.\]
Note that $B$ has ${\mathscr P}$, as it is the finite union of its closed subspaces each with ${\mathscr P}$. Therefore $\mbox{cl}_X A$ has ${\mathscr P}$, as it is closed in $B$.
\end{proof}

We will use the following simple observation in a number of places; we record it here for convenience.

\begin{lemma}\label{LKG}
Let $X$ be a completely regular space and let $f:X\rightarrow[0,1]$ be continuous. If $0<r<1$ then
\[f_\beta^{-1}\big[[0,r)\big]\subseteq\mbox{\em int}_{\beta X}\mbox{\em cl}_{\beta X}f^{-1}\big[[0,r)\big].\]
\end{lemma}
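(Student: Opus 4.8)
The plan is to work entirely inside $\beta X$ and exploit the fact that $f_\beta^{-1}[[0,r)]$ is open there. First I would set $U = f_\beta^{-1}[[0,r)]$, which is an open subset of $\beta X$ since $[0,r)$ is open in $[0,1]$ and $f_\beta$ is continuous. The intersection $U \cap X = f^{-1}[[0,r)]$ because $f_\beta$ extends $f$, so writing $V = f^{-1}[[0,r)]$ we have $U \cap X = V$ and hence $U \subseteq \mbox{cl}_{\beta X}(U \cap X) = \mbox{cl}_{\beta X} V$; this last containment uses that $X$ is dense in $\beta X$, so every nonempty open subset of $\beta X$ meets $X$, and therefore every open set is contained in the closure of its trace on $X$. (Concretely: if $p \in U$ and $W$ is any neighborhood of $p$ in $\beta X$, then $W \cap U$ is a nonempty open set, so it meets $X$, i.e. it meets $U \cap X = V$; thus $p \in \mbox{cl}_{\beta X} V$.)

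Now I have $U$ open in $\beta X$ and $U \subseteq \mbox{cl}_{\beta X} V$. Taking interiors, $U = \mbox{int}_{\beta X} U \subseteq \mbox{int}_{\beta X}\mbox{cl}_{\beta X} V$, which is exactly the claimed inclusion $f_\beta^{-1}[[0,r)] \subseteq \mbox{int}_{\beta X}\mbox{cl}_{\beta X} f^{-1}[[0,r)]$. So the whole argument is just: the preimage under $f_\beta$ of an open set is open, it is contained in the closure of its intersection with the dense subspace $X$, and an open set contained in a closed set is contained in that set's interior.

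I do not expect any real obstacle here; the hypothesis $0 < r < 1$ is used only to guarantee that $[0,r)$ is open in $[0,1]$ (when $r = 1$ it is the whole space, still fine, and when $r = 0$ it is empty), and the continuity of $f_\beta$ together with the density of $X$ in $\beta X$ does all the work. The one point to state carefully is the passage from "$U$ open and $U\subseteq \mbox{cl}_{\beta X}V$" to "$U \subseteq \mbox{int}_{\beta X}\mbox{cl}_{\beta X}V$", which is immediate since $\mbox{int}_{\beta X}\mbox{cl}_{\beta X}V$ is the largest open set contained in $\mbox{cl}_{\beta X}V$ and $U$ is one such.
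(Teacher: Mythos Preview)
Your proof is correct and is essentially the same argument as the paper's: show that the open set $f_\beta^{-1}[[0,r)]$ is contained in $\mbox{cl}_{\beta X}f^{-1}[[0,r)]$ via density of $X$, then pass to the interior. The paper compresses this into a single displayed chain of inclusions and leaves the final ``open set contained in a closed set lies in its interior'' step implicit, whereas you spell everything out.
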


\begin{proof}
Note that
\[f_\beta^{-1}\big[[0,r)\big]\subseteq\mbox{cl}_{\beta X}f_\beta^{-1}\big[[0,r)\big]=\mbox{cl}_{\beta X}\big(X\cap f_\beta^{-1}\big[[0,r)\big]\big)=\mbox{cl}_{\beta X}f^{-1}\big[[0,r)\big].\]
\end{proof}

\begin{lemma}\label{BA27}
Let $X$ be a completely regular space and let ${\mathscr P}$ be a topological property. Suppose that $Z\subseteq C$, where $Z\in\mbox{\em Z}(X)$, $C\in \mbox{\em Coz}(X)$ and $\mbox{\em cl}_XC$ has ${\mathscr P}$. Then $\mbox{\em cl}_{\beta X}Z\subseteq\lambda_{\mathscr P} X$.
\end{lemma}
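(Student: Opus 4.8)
The plan is to exhibit a single cozero--set of $X$ --- namely $C$ itself --- that serves as a witness in the definition of $\lambda_{\mathscr P}X$ for every point of $\mathrm{cl}_{\beta X}Z$. Concretely, I will show that $\mathrm{cl}_{\beta X}Z\subseteq\mathrm{int}_{\beta X}\mathrm{cl}_{\beta X}C$; since $C\in\mathrm{Coz}(X)$ and $\mathrm{cl}_XC$ has ${\mathscr P}$, the right--hand side is by definition contained in $\lambda_{\mathscr P}X$, which finishes the proof. Note that here we cannot shrink $C$ to a smaller cozero--set whose closure still has ${\mathscr P}$, since ${\mathscr P}$ is not assumed to be closed hereditary in this lemma; so $C$ must be the witness, and the work goes into squeezing $\mathrm{cl}_{\beta X}Z$ inside the interior of $\mathrm{cl}_{\beta X}C$.

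First I would produce an auxiliary function. The sets $Z$ and $X\setminus C$ are disjoint zero--sets of $X$ (the latter because $C$ is a cozero--set), hence completely separated (see \cite{GJ}); choose a continuous $g:X\rightarrow[0,1]$ with $g\equiv 0$ on $Z$ and $g\equiv 1$ on $X\setminus C$. Two observations: (i) $g^{-1}[[0,1)]\subseteq C$, because $g\equiv 1$ off $C$; and (ii) the continuous extension $g_\beta:\beta X\rightarrow[0,1]$ vanishes on $\mathrm{cl}_{\beta X}Z$, because it vanishes on the dense subset $Z$ of that closed set.

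Now I would chain inclusions. From (ii), $\mathrm{cl}_{\beta X}Z\subseteq g_\beta^{-1}(0)\subseteq g_\beta^{-1}[[0,1/2)]$. Applying Lemma \ref{LKG} to $g$ with $r=1/2$ gives $g_\beta^{-1}[[0,1/2)]\subseteq\mathrm{int}_{\beta X}\mathrm{cl}_{\beta X}g^{-1}[[0,1/2)]$. By (i), $g^{-1}[[0,1/2)]\subseteq g^{-1}[[0,1)]\subseteq C$, so $\mathrm{cl}_{\beta X}g^{-1}[[0,1/2)]\subseteq\mathrm{cl}_{\beta X}C$ and hence $\mathrm{int}_{\beta X}\mathrm{cl}_{\beta X}g^{-1}[[0,1/2)]\subseteq\mathrm{int}_{\beta X}\mathrm{cl}_{\beta X}C$. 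Stringing these together yields $\mathrm{cl}_{\beta X}Z\subseteq\mathrm{int}_{\beta X}\mathrm{cl}_{\beta X}C\subseteq\lambda_{\mathscr P}X$, as desired.

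I do not anticipate a genuine obstacle here; the only points that require care are the choice of an intermediate level $r\in(0,1)$ (I use $r=1/2$) so that Lemma \ref{LKG} applies while $g^{-1}[[0,r)]$ still stays inside $C$, and the observation already noted above that $C$ itself --- not some smaller cozero--set --- is the set one must feed into the definition of $\lambda_{\mathscr P}X$, precisely because closed heredity of ${\mathscr P}$ is unavailable.
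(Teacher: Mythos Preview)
Your proposal is correct and follows essentially the same argument as the paper: separate the disjoint zero--sets $Z$ and $X\setminus C$ by a continuous $[0,1]$--valued function, apply Lemma \ref{LKG} at level $r=1/2$, and conclude $\mathrm{cl}_{\beta X}Z\subseteq\mathrm{int}_{\beta X}\mathrm{cl}_{\beta X}C\subseteq\lambda_{\mathscr P}X$. Your added remark that $C$ itself must serve as the witness (since ${\mathscr P}$ is not assumed closed hereditary) is a correct and useful observation, though the paper does not make it explicit.
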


\begin{proof}
The zero--sets $Z$ and $X\backslash C$ of $X$, are disjoint, and thus, are completely separated in $X$. Let $f:X\rightarrow[0,1]$ be continuous with $f|Z\equiv0$ and $f|(X\backslash C)\equiv1$. Using Lemma \ref{LKG} we have
\[\mbox{cl}_{\beta X}Z\subseteq \mbox{Z}(f_\beta)\subseteq f_\beta^{-1}\big[[0,1/2)\big]\subseteq\mbox{int}_{\beta X}\mbox{cl}_{\beta X}f^{-1}\big[[0,1/2)\big]\subseteq\mbox{int}_{\beta X}\mbox{cl}_{\beta X} C\subseteq\lambda_{\mathscr P}X.\]
\end{proof}

Let $X$ be a completely regular space. For an open subspace $U$ of $X$, the {\em extension of $U$ to $\beta X$} is defined to be
\[\mbox {Ex}_XU=\beta X\backslash\mbox {cl}_{\beta X}(X\backslash U).\]
The following lemma is well known. (See Lemma 7.1.13 of \cite{E} or  Lemma 3.1 of \cite{vD}.)

\begin{lemma}\label{9}
Let $X$ be a completely regular space and let $U$ and $V$ be open subspaces of $X$. Then
\begin{itemize}
\item[\rm(1)] $X\cap\mbox{\em Ex}_XU=U$, and thus $\mbox{\em cl}_{\beta X}\mbox{\em Ex}_XU=\mbox{\em cl}_{\beta X}U$.
\item[\rm(2)] $\mbox{\em Ex}_X(U\cap V)=\mbox{\em Ex}_XU\cap\mbox{\em Ex}_XV$.
\end{itemize}
\end{lemma}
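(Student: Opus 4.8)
The plan is to argue directly from the definition $\mbox{Ex}_XU=\beta X\backslash\mbox{cl}_{\beta X}(X\backslash U)$, using only two elementary facts about the dense embedding $X\subseteq\beta X$: that $X\cap\mbox{cl}_{\beta X}S=\mbox{cl}_XS$ for every subset $S$ of $X$, and that closure commutes with finite unions.

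For part (1), the set $X\backslash U$ is closed in $X$, so $X\cap\mbox{cl}_{\beta X}(X\backslash U)=\mbox{cl}_X(X\backslash U)=X\backslash U$; taking complements within $X$ gives $X\cap\mbox{Ex}_XU=U$. For the second assertion of (1), taking closures in $\beta X$ yields $\mbox{cl}_{\beta X}U=\mbox{cl}_{\beta X}(X\cap\mbox{Ex}_XU)\subseteq\mbox{cl}_{\beta X}\mbox{Ex}_XU$. For the reverse inclusion, note that $\mbox{Ex}_XU$ is open in $\beta X$, being the complement of the closed set $\mbox{cl}_{\beta X}(X\backslash U)$, and that $X$ is dense in $\beta X$; hence $\mbox{Ex}_XU\subseteq\mbox{cl}_{\beta X}(X\cap\mbox{Ex}_XU)=\mbox{cl}_{\beta X}U$, and taking closures gives $\mbox{cl}_{\beta X}\mbox{Ex}_XU\subseteq\mbox{cl}_{\beta X}U$.

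For part (2), I would use the identity $X\backslash(U\cap V)=(X\backslash U)\cup(X\backslash V)$, apply $\mbox{cl}_{\beta X}$ and distribute it over this finite union, and then take complements in $\beta X$, converting the union into an intersection:
\[\mbox{Ex}_X(U\cap V)=\beta X\backslash\big(\mbox{cl}_{\beta X}(X\backslash U)\cup\mbox{cl}_{\beta X}(X\backslash V)\big)=\mbox{Ex}_XU\cap\mbox{Ex}_XV.\]

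There is no genuine difficulty here; the only step needing a moment's attention is the reverse inclusion in the second assertion of (1), where one uses the density of $X$ in $\beta X$ rather than merely the subspace relation. Everything else is routine set-theoretic bookkeeping, which is presumably why the lemma is quoted as well known.
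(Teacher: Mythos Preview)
Your argument is correct in every detail. The paper does not actually prove this lemma; it merely records it as well known and cites Lemma 7.1.13 of \cite{E} and Lemma 3.1 of \cite{vD}, so your direct verification from the definition is a welcome supplement rather than a competing approach.
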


The following lemma is proved by E.G. Skljarenko in \cite{S}. It is rediscovered by E.K. van Douwen in \cite{vD}.

\begin{lemma}[Skljarenko \cite{S} and van Douwen  \cite{vD}]\label{10}
Let $X$ be a completely regular space and let $U$ be an open subspace of $X$. Then
\[\mbox{\em bd}_{\beta X}\mbox {\em Ex}_XU=\mbox{\em cl}_{\beta X}\mbox{\em bd}_XU.\]
\end{lemma}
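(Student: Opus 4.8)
The plan is to establish the two inclusions $\mbox{bd}_{\beta X}\mbox{Ex}_XU\subseteq\mbox{cl}_{\beta X}\mbox{bd}_XU$ and $\mbox{cl}_{\beta X}\mbox{bd}_XU\subseteq\mbox{bd}_{\beta X}\mbox{Ex}_XU$ separately, using only the definition $\mbox{Ex}_XU=\beta X\backslash\mbox{cl}_{\beta X}(X\backslash U)$ together with Lemma \ref{9}(1). Throughout I would write $W=\mbox{Ex}_XU$ for brevity, so that $W$ is open in $\beta X$, $\mbox{cl}_{\beta X}W=\mbox{cl}_{\beta X}U$, and $\beta X\backslash W=\mbox{cl}_{\beta X}(X\backslash U)$, hence also $\mbox{cl}_{\beta X}(\beta X\backslash W)=\mbox{cl}_{\beta X}(X\backslash U)$. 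Then $\mbox{bd}_{\beta X}W=\mbox{cl}_{\beta X}W\cap\mbox{cl}_{\beta X}(\beta X\backslash W)=\mbox{cl}_{\beta X}U\cap\mbox{cl}_{\beta X}(X\backslash U)$.

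First I would prove $\mbox{bd}_{\beta X}W\subseteq\mbox{cl}_{\beta X}\mbox{bd}_XU$. Suppose $p\in\mbox{bd}_{\beta X}W$ but $p\notin\mbox{cl}_{\beta X}\mbox{bd}_XU$. Pick an open neighborhood $O$ of $p$ in $\beta X$ with $O\cap\mbox{bd}_XU=\emptyset$; shrinking, we may take $O\subseteq W$ since $p\in W$ ($W$ is open and $p\in\mbox{bd}_{\beta X}W\subseteq W$ only if... — actually care is needed here: $p$ need not lie in $W$). The cleaner route: since $p\in\mbox{cl}_{\beta X}U$, every neighborhood of $p$ meets $U$, hence meets $X$; and since $O\cap X$ is an open subset of $X$ disjoint from $\mbox{bd}_XU=\mbox{cl}_XU\backslash\mbox{int}_XU$, the set $O\cap X$ is split by $U$ into the relatively clopen pieces $O\cap X\cap U$ (which is nonempty, as $p\in\mbox{cl}_{\beta X}U$) and $O\cap X\backslash\mbox{cl}_XU$. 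I would then argue that $p\notin\mbox{cl}_{\beta X}(X\backslash\mbox{cl}_XU)$: otherwise a suitable smaller neighborhood would meet both $U$ and $X\backslash\mbox{cl}_XU$ while avoiding $\mbox{bd}_XU$, contradicting that these two sets are separated inside $O\cap X$ by a continuous function. From $p\in W=\beta X\backslash\mbox{cl}_{\beta X}(X\backslash U)$ and $X\backslash U=(X\backslash\mbox{cl}_XU)\cup\mbox{bd}_XU$, we get $\mbox{cl}_{\beta X}(X\backslash U)=\mbox{cl}_{\beta X}(X\backslash\mbox{cl}_XU)\cup\mbox{cl}_{\beta X}\mbox{bd}_XU$, so $p\notin\mbox{cl}_{\beta X}(X\backslash U)$, i.e. $p\in W$; but then $p\notin\mbox{cl}_{\beta X}(\beta X\backslash W)$ forces $p\notin\mbox{bd}_{\beta X}W$, a contradiction.

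For the reverse inclusion $\mbox{cl}_{\beta X}\mbox{bd}_XU\subseteq\mbox{bd}_{\beta X}W$, note $\mbox{bd}_XU\subseteq\mbox{cl}_XU\subseteq\mbox{cl}_{\beta X}U=\mbox{cl}_{\beta X}W$, so $\mbox{cl}_{\beta X}\mbox{bd}_XU\subseteq\mbox{cl}_{\beta X}W$; similarly $\mbox{bd}_XU\subseteq X\backslash U$ gives $\mbox{cl}_{\beta X}\mbox{bd}_XU\subseteq\mbox{cl}_{\beta X}(X\backslash U)=\beta X\backslash W\subseteq\mbox{cl}_{\beta X}(\beta X\backslash W)$. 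Intersecting, $\mbox{cl}_{\beta X}\mbox{bd}_XU\subseteq\mbox{cl}_{\beta X}W\cap\mbox{cl}_{\beta X}(\beta X\backslash W)=\mbox{bd}_{\beta X}W$, which completes this direction cheaply.

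The main obstacle is the first inclusion, specifically the separation argument showing that a point of $\mbox{cl}_{\beta X}U$ lying outside $\mbox{cl}_{\beta X}\mbox{bd}_XU$ cannot simultaneously lie in $\mbox{cl}_{\beta X}(X\backslash\mbox{cl}_XU)$. The delicate point is that $U$ and $X\backslash\mbox{cl}_XU$ are merely open and disjoint in $X$, not completely separated, so one cannot directly invoke a global function; instead one works locally inside the trace $O\cap X$ of a small neighborhood $O$ of $p$ avoiding $\mbox{bd}_XU$, where $U\cap O$ is genuinely clopen in $O\cap X$, and pushes this clopenness up to $\beta X$ to separate $p$ from one of the two pieces. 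Once that local separation is in hand, the bookkeeping with $\mbox{Ex}_XU$ and Lemma \ref{9}(1) is routine.
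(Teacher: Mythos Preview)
The paper does not supply a proof of this lemma; it is simply cited as a result of Skljarenko and van Douwen, so there is no argument in the paper to compare your plan against.

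That said, your outline is sound. The easy inclusion $\mbox{cl}_{\beta X}\mbox{bd}_XU\subseteq\mbox{bd}_{\beta X}\mbox{Ex}_XU$ is handled correctly. For the hard inclusion your reduction---via $\mbox{bd}_{\beta X}\mbox{Ex}_XU=\mbox{cl}_{\beta X}U\cap\mbox{cl}_{\beta X}(X\backslash U)$ and the splitting $X\backslash U=(X\backslash\mbox{cl}_XU)\cup\mbox{bd}_XU$---is right, and your diagnosis of the obstacle is accurate: local clopenness of $U$ inside $O\cap X$ is not by itself a contradiction, because a continuous $\{0,1\}$-valued function on $O\cap X$ need not extend to $X$, and it is complete separation in $X$ (not in $O\cap X$) that controls intersections of $\beta X$-closures. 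The clean way to ``push the clopenness up'' is a pasting argument: choose $g:\beta X\to[0,1]$ continuous with $g(p)=0$ and $g\equiv 1$ on $\mbox{cl}_{\beta X}\mbox{bd}_XU$, and define $\psi:X\to[0,1]$ by $\psi=g$ on $\mbox{cl}_XU$ and $\psi\equiv 1$ on $X\backslash U$; these closed pieces overlap exactly in $\mbox{bd}_XU$, where both prescriptions give the value $1$, so $\psi$ is continuous on $X$. Its Stone extension $\psi_\beta$ then satisfies $\psi_\beta(p)=g(p)=0$ (approach $p$ through $U$, where $\psi=g$) and simultaneously $\psi_\beta(p)=1$ (approach $p$ through $X\backslash U$), the desired contradiction. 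With this device in place your plan goes through. One minor remark: your final step ``$p\in W$, hence $p\notin\mbox{cl}_{\beta X}(\beta X\backslash W)$'' is indeed justified, since $\beta X\backslash W=\mbox{cl}_{\beta X}(X\backslash U)$ is already closed.
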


\begin{lemma}\label{j1}
Let $X$ be a completely regular space and let ${\mathscr P}$ be a topological property. Let $U$ be an open subspace of $X$ such that $\mbox{\em bd}_XU\subseteq Z\subseteq C$, where $Z\in\mbox{\em Z}(X)$, $C\in\mbox{\em Coz}(X)$ and $\mbox{\em cl}_XC$ has ${\mathscr P}$. Then
\[\mbox{\em cl}_{\beta X}U\backslash\lambda_{\mathscr P} X=\mbox{\em Ex}_X U\backslash\lambda_{\mathscr P} X.\]
\end{lemma}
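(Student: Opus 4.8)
The plan is to express the Stone--\v Cech closure of $U$ in terms of its extension $\mbox{Ex}_XU$ and the boundary, and then to show that the ``extra'' part is swallowed by $\lambda_{\mathscr P}X$. First I would record that $\mbox{Ex}_XU$ is open in $\beta X$ (its complement $\mbox{cl}_{\beta X}(X\backslash U)$ being closed), so that
\[\mbox{cl}_{\beta X}\mbox{Ex}_XU=\mbox{Ex}_XU\cup\mbox{bd}_{\beta X}\mbox{Ex}_XU.\]
By Lemma \ref{9}(1) the left-hand side equals $\mbox{cl}_{\beta X}U$, and by Lemma \ref{10} we have $\mbox{bd}_{\beta X}\mbox{Ex}_XU=\mbox{cl}_{\beta X}\mbox{bd}_XU$; combining these gives
\[\mbox{cl}_{\beta X}U=\mbox{Ex}_XU\cup\mbox{cl}_{\beta X}\mbox{bd}_XU.\]
(Note also that $\mbox{Ex}_XU\subseteq\mbox{cl}_{\beta X}\mbox{Ex}_XU=\mbox{cl}_{\beta X}U$, so this union sits inside $\mbox{cl}_{\beta X}U$ as expected.)

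Next I would bring in the hypothesis $\mbox{bd}_XU\subseteq Z\subseteq C$. Applying Lemma \ref{BA27} to the zero-set $Z$ yields $\mbox{cl}_{\beta X}Z\subseteq\lambda_{\mathscr P}X$, and since $\mbox{bd}_XU\subseteq Z$ we get $\mbox{cl}_{\beta X}\mbox{bd}_XU\subseteq\mbox{cl}_{\beta X}Z\subseteq\lambda_{\mathscr P}X$. Now simply remove $\lambda_{\mathscr P}X$ from both sides of the displayed decomposition:
\[\mbox{cl}_{\beta X}U\backslash\lambda_{\mathscr P}X=\big(\mbox{Ex}_XU\backslash\lambda_{\mathscr P}X\big)\cup\big(\mbox{cl}_{\beta X}\mbox{bd}_XU\backslash\lambda_{\mathscr P}X\big)=\mbox{Ex}_XU\backslash\lambda_{\mathscr P}X,\]
the second term on the right being empty by the inclusion just established. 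This is exactly the claimed equality.

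I do not anticipate a genuine obstacle here: the argument is a short combination of Lemmas \ref{9}, \ref{10} and \ref{BA27}, and the only point requiring a little care is the observation that $\mbox{Ex}_XU$ is open so that its $\beta X$-closure really does split as interior part plus boundary, letting Lemma \ref{10} be applied. Everything else is set-theoretic bookkeeping.
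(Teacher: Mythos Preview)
Your proof is correct and follows essentially the same approach as the paper: both use Lemmas \ref{9} and \ref{10} to decompose $\mbox{cl}_{\beta X}U$ as $\mbox{Ex}_XU\cup\mbox{cl}_{\beta X}\mbox{bd}_XU$, then invoke Lemma \ref{BA27} to absorb the boundary term into $\lambda_{\mathscr P}X$. The only difference is presentational---you spell out explicitly that $\mbox{Ex}_XU$ is open to justify the closure-equals-interior-plus-boundary decomposition, while the paper leaves this implicit.
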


\begin{proof}
By Lemma \ref{BA27} we have $\mbox{cl}_{\beta X}Z\subseteq\lambda_{\mathscr P} X$. The result then follows, as by Lemmas \ref{9} and \ref{10} we have
\[\mbox{cl}_{\beta X}U=\mbox{cl}_{\beta X}\mbox{Ex}_XU=\mbox{Ex}_XU\cup\mbox{bd}_{\beta X}\mbox{Ex}_XU=\mbox{Ex}_XU\cup\mbox{cl}_{\beta X}\mbox{bd}_XU\]
and $\mbox{cl}_{\beta X}\mbox{bd}_X U\subseteq\mbox{cl}_{\beta X}Z$.
\end{proof}

We are now ready to prove the following main result of this section.

\begin{theorem}\label{TTES}
Let $X$ be a completely regular space and let ${\mathscr P}$ be a closed hereditary topological property preserved under finite sums of closed subspaces. The following are equivalent:
\begin{itemize}
\item[\rm(1)] $X$ is ${\mathscr P}$--connected.
\item[\rm(2)] $\beta X\backslash\lambda_{\mathscr P} X$ is connected.
\end{itemize}
\end{theorem}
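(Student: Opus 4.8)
The plan is to prove the contrapositive in each direction, showing that a ${\mathscr P}$--separation for $X$ corresponds exactly to a genuine disconnection of $\beta X\backslash\lambda_{\mathscr P} X$, and vice versa. The key technical tool is Lemma \ref{j1}: whenever $U$ is an open subspace of $X$ whose boundary sits inside a zero--set contained in a cozero--set with ${\mathscr P}$ closure, then $\mbox{cl}_{\beta X}U$ and $\mbox{Ex}_XU$ differ only within $\lambda_{\mathscr P} X$. This means that, modulo $\lambda_{\mathscr P} X$, the closure of such a $U$ is clopen in $\beta X$, since $\mbox{Ex}_XU$ is open and $\mbox{cl}_{\beta X}U$ is closed.

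For the implication (1)$\Rightarrow$(2), I would argue the contrapositive. Suppose $\beta X\backslash\lambda_{\mathscr P} X$ is disconnected, say $\beta X\backslash\lambda_{\mathscr P} X=A\cup B$ with $A,B$ disjoint, non--empty, and closed in $\beta X\backslash\lambda_{\mathscr P} X$. Since $\lambda_{\mathscr P} X$ is open in $\beta X$, the set $\beta X\backslash\lambda_{\mathscr P} X$ is compact, so $A$ and $B$ are disjoint compact subsets of $\beta X$; choose disjoint open sets $G\supseteq A$ and $H\supseteq B$ in $\beta X$. The remainder $\beta X\backslash(G\cup H)$ is then a compact subset of $\lambda_{\mathscr P} X$, so by the definition of $\lambda_{\mathscr P} X$ it is covered by finitely many sets $\mbox{int}_{\beta X}\mbox{cl}_{\beta X}C_i$ with $\mbox{cl}_X C_i$ having ${\mathscr P}$; let $C$ be (a cozero--set of $X$ whose $\beta X$--closure contains) this finite union — one can take $C=C_1\cup\cdots\cup C_n$, whose closure in $X$ has ${\mathscr P}$ as a finite union of closed ${\mathscr P}$--subspaces. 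Now I would set $C'=X\cap G$ and $D'=X\cap H$; these are open in $X$, disjoint, and $X\backslash(C'\cup D')\subseteq X\cap(\beta X\backslash(G\cup H))\subseteq\mbox{cl}_X C$ which has ${\mathscr P}$. To upgrade $C',D'$ to \emph{cozero}--sets with non--${\mathscr P}$ closure one shrinks slightly: since $A\cap X$ and $B\cap X$ are non--empty (points of $A,B$ not in $\lambda_{\mathscr P} X$ are limits of $X$, and in fact one checks $A,B$ are not contained in $\lambda_{\mathscr P} X$ so $\mbox{cl}_XC',\mbox{cl}_XD'$ are non--${\mathscr P}$ by Lemma \ref{B}), pick a point of $X$ in each of $C',D'$, use complete regularity to find cozero--sets $C\subseteq C'$, $D\subseteq D'$ still missing the relevant points, and arrange via Lemma \ref{LKG}-type reasoning that $X\backslash(C\cup D)$ remains inside a cozero--set with ${\mathscr P}$ closure. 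This yields a ${\mathscr P}$--separation, contradicting (1).

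For (2)$\Rightarrow$(1), again take the contrapositive: let $C,D$ be a ${\mathscr P}$--separation of $X$, with $E\in\mbox{Coz}(X)$, $X\backslash(C\cup D)\subseteq E$, and $\mbox{cl}_XE$ having ${\mathscr P}$. Put $U=C$ and note $\mbox{bd}_XU\subseteq X\backslash(C\cup D)\subseteq E$; shrinking, one finds a zero--set $Z$ with $\mbox{bd}_XU\subseteq Z\subseteq E'\subseteq E$ for a suitable cozero--set $E'$ whose closure still has ${\mathscr P}$ (closed hereditariness), so Lemma \ref{j1} applies: $\mbox{cl}_{\beta X}C\backslash\lambda_{\mathscr P} X=\mbox{Ex}_XC\backslash\lambda_{\mathscr P} X$ is clopen in $\beta X\backslash\lambda_{\mathscr P} X$. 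Symmetrically $\mbox{cl}_{\beta X}D\backslash\lambda_{\mathscr P} X$ is clopen. These two sets are disjoint: $\mbox{cl}_{\beta X}C\cap\mbox{cl}_{\beta X}D$ is contained in $\mbox{cl}_{\beta X}(X\backslash E)^c\cdots$ — more carefully, $C$ and $D$ are completely separated in $X$ (being disjoint cozero--sets separated by the zero--set structure), hence $\mbox{cl}_{\beta X}C\cap\mbox{cl}_{\beta X}D=\emptyset$. Finally their union covers $\beta X\backslash\lambda_{\mathscr P} X$: indeed $X\backslash(C\cup D)\subseteq E$ gives $\mbox{cl}_{\beta X}(X\backslash(C\cup D))\subseteq\mbox{cl}_{\beta X}E\subseteq\lambda_{\mathscr P} X$ by Lemma \ref{BA27} (after enclosing $X\setminus(C\cup D)$ in a zero--set inside $E$), while $\beta X=\mbox{cl}_{\beta X}C\cup\mbox{cl}_{\beta X}D\cup\mbox{cl}_{\beta X}(X\backslash(C\cup D))$; and each of $\mbox{cl}_{\beta X}C\backslash\lambda_{\mathscr P} X$, $\mbox{cl}_{\beta X}D\backslash\lambda_{\mathscr P} X$ is non--empty since by Lemma \ref{B} a subspace of $X$ whose $\beta X$--closure lies in $\lambda_{\mathscr P} X$ has ${\mathscr P}$, contradicting that $\mbox{cl}_XC$, $\mbox{cl}_XD$ are non--${\mathscr P}$. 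Thus $\beta X\backslash\lambda_{\mathscr P} X$ is disconnected.

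The main obstacle I anticipate is the bookkeeping in passing between open sets and cozero--sets: the definition of ${\mathscr P}$--separation insists on \emph{cozero}--sets with non--${\mathscr P}$ closures and on the remainder lying in a \emph{cozero}--set with ${\mathscr P}$ closure, whereas the natural objects produced by the topology of $\beta X$ (traces of open sets, interiors of closures) are merely open. Converting between them without losing the ${\mathscr P}$/non--${\mathscr P}$ conditions — particularly ensuring that after shrinking $C',D'$ to cozero--sets their closures remain non--${\mathscr P}$, and that the enlarged remainder still fits inside a cozero--set with ${\mathscr P}$ closure — requires careful use of complete regularity together with Lemmas \ref{LKG}, \ref{BA27}, and the hypothesis that ${\mathscr P}$ is closed hereditary and preserved under finite closed sums. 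Everything else is a direct manipulation of closures in $\beta X$ via the extension operator $\mbox{Ex}_X$ and Lemma \ref{j1}.
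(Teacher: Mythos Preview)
Your overall strategy matches the paper's: both directions go by contrapositive, and the passage from a ${\mathscr P}$--separation of $X$ to a clopen splitting of $\beta X\backslash\lambda_{\mathscr P}X$ runs through Lemma~\ref{j1} and the $\mbox{Ex}_X$ operator, exactly as in the paper. However, two of your steps do not work as written.

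\emph{In the direction $(1)\Rightarrow(2)$}, the ``upgrade to cozero--sets'' step is a genuine gap. Starting from arbitrary disjoint open $G,H\subseteq\beta X$, the traces $C'=X\cap G$ and $D'=X\cap H$ are merely open, and your proposed fix --- shrinking to cozero--sets $C\subseteq C'$, $D\subseteq D'$ via complete regularity --- goes the wrong way: shrinking can only make $\mbox{cl}_XC$ \emph{smaller}, which may well give it ${\mathscr P}$, so you cannot preserve the non--${\mathscr P}$ condition this way. (Also, your aside that ``$A\cap X$ is non--empty'' need not hold.) The paper avoids this entirely by choosing a Urysohn function $f:\beta X\to[0,1]$ with $f|A\equiv 0$, $f|B\equiv 1$ and setting $C=X\cap f^{-1}[[0,1/2))$, $D=X\cap f^{-1}[(1/2,1]]$: these are cozero in $X$ from the outset, and if $\mbox{cl}_XC$ had ${\mathscr P}$ one would get $A\subseteq f^{-1}[[0,1/2))\subseteq\mbox{int}_{\beta X}\mbox{cl}_{\beta X}C\subseteq\lambda_{\mathscr P}X$ directly from the definition of $\lambda_{\mathscr P}X$ and Lemma~\ref{LKG}. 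The remainder $X\cap f^{-1}(1/2)$ then sits inside $E=X\cap f^{-1}((1/3,2/3))\in\mbox{Coz}(X)$, whose closure has ${\mathscr P}$ by Lemma~\ref{B}.

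\emph{In the direction $(2)\Rightarrow(1)$}, your disjointness argument is incorrect: disjoint cozero--sets are \emph{not} completely separated in general (take $(-\infty,0)$ and $(0,\infty)$ in $\mathbb{R}$), so $\mbox{cl}_{\beta X}C\cap\mbox{cl}_{\beta X}D$ need not be empty. But you already have the $\mbox{Ex}_X$ representation from Lemma~\ref{j1}, so the correct (and paper's) argument is simply
\[
(\mbox{cl}_{\beta X}C\backslash\lambda_{\mathscr P}X)\cap(\mbox{cl}_{\beta X}D\backslash\lambda_{\mathscr P}X)\subseteq\mbox{Ex}_XC\cap\mbox{Ex}_XD=\mbox{Ex}_X(C\cap D)=\emptyset
\]
via Lemma~\ref{9}. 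Also, no ``shrinking'' is needed to find a zero--set inside $E$: the set $Z=X\backslash(C\cup D)$ is itself a zero--set of $X$ (complement of a cozero--set), so $\mbox{bd}_XC\subseteq Z\subseteq E$ already, and Lemma~\ref{BA27} applies directly.
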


\begin{proof}
(1) {\em  implies} (2). Suppose that $\beta X\backslash\lambda_{\mathscr P} X$ is disconnected. We show that $X$ is then ${\mathscr P}$--disconnected. Let $G$ and $H$ be a separation for $\beta X\backslash\lambda_{\mathscr P} X$. Since $G$ and $H$ are closed in $\beta X\backslash\lambda_{\mathscr P} X$ and the latter is compact, as it is closed in $\beta X$ (note that $\lambda_{\mathscr P} X$ is open in $\beta X$ by its definition) $G$ and $H$ are compact and thus closed in $\beta X$. By normality of $\beta X$ there exists a continuous $f:\beta X\rightarrow[0,1]$ with $f|G\equiv0$ and $f|H\equiv1$. Let
\[C=X\cap f^{-1}\big[[0,1/2)\big]\;\;\;\;\mbox{ and }\;\;\;\;D=X\cap f^{-1}\big[(1/2,1]\big].\]
Then $C,D\in\mbox{Coz}(X)$ and $C$ and $D$ are disjoint. We prove that $\mbox{cl}_XC$ is non--${\mathscr P}$; the proof that $\mbox{cl}_XD$ is non--${\mathscr P}$ is analogous. Suppose to the contrary that $\mbox{cl}_XC$ has ${\mathscr P}$. Then, using Lemma \ref{LKG} we have
\[G\subseteq f^{-1}\big[[0,1/2)\big]\subseteq\mbox{int}_{\beta X}\mbox{cl}_{\beta X}\big(X\cap f^{-1}\big[[0,1/2)\big]\big)=\mbox{int}_{\beta X}\mbox{cl}_{\beta X} C\subseteq\lambda_{\mathscr P} X,\]
which is a contradiction. Note that
\[X\backslash (C\cup D)=X\cap f^{-1}(1/2)\subseteq X\cap f^{-1}\big((1/3, 2/3)\big)=E\in\mbox{Coz}(X)\]
and $\mbox{cl}_XE$ has ${\mathscr P}$ by Lemma \ref{B}, as
\[\mbox{cl}_{\beta X}E\subseteq f^{-1}\big([1/3, 2/3]\big)\subseteq\lambda_{\mathscr P} X.\]
Thus the pair $C$ and $D$ is a ${\mathscr P}$--separation for $X$, that is, $X$ is ${\mathscr P}$--disconnected.

(2) {\em  implies} (1). Suppose that $X$ is ${\mathscr P}$--disconnected. We show that $\beta X\backslash\lambda_{\mathscr P} X$ is disconnected. Let $U$ and $V$ be a ${\mathscr P}$--separation for $X$ and let $W\in\mbox{Coz}(X)$ be such that $\mbox{cl}_XW$ has ${\mathscr P}$ and $Z=X\backslash(U\cup V)\subseteq W$. Note that $Z\in\mbox{Z}(X)$. By Lemma \ref{BA27} we have $\mbox{cl}_{\beta X}Z\subseteq\lambda_{\mathscr P} X$ and then, since $X=U\cup V\cup Z$, it follows that
\[\beta X\backslash\lambda_{\mathscr P} X=(\mbox{cl}_{\beta X}U\backslash\lambda_{\mathscr P} X)\cup(\mbox{cl}_{\beta X}V\backslash\lambda_{\mathscr P} X).\]
Since
\[\mbox{bd}_XU=\mbox{cl}_XU\cap(X\backslash U)=\mbox{cl}_XU\cap(V\cup Z)=\mbox{cl}_XU\cap Z\subseteq Z,\]
by Lemma \ref{j1} we have
\[A=\mbox{cl}_{\beta X}U\backslash\lambda_{\mathscr P} X=\mbox{Ex}_X U\backslash\lambda_{\mathscr P} X.\]
Note that $A$ is non--empty, as otherwise, we have $\mbox{cl}_{\beta X}U\subseteq\lambda_{\mathscr P} X$, which by Lemma \ref{B} is impossible, as $\mbox{cl}_XU$ is non--${\mathscr P}$.
Similarly, if we let
\[B=\mbox{cl}_{\beta X}V\backslash\lambda_{\mathscr P} X=\mbox{Ex}_X V\backslash\lambda_{\mathscr P} X\]
then $B$ is non--empty. Observe that both $A$ and $B$ are closed subspaces of $\beta X\backslash\lambda_{\mathscr P} X$. To conclude the proof, note that using Lemma \ref{9} we have
\[A\cap B\subseteq\mbox{Ex}_X U\cap\mbox{Ex}_X V=\mbox{Ex}_X (U\cap V)=\emptyset.\]
Thus the pair $A$ and $B$ is a separation for $\beta X\backslash\lambda_{\mathscr P} X$, that is, the latter is disconnected.
\end{proof}

\begin{example}\label{QLL}
The list of topological properties ${\mathscr P}$ satisfying the assumption of Theorem \ref{TTES} is quite long and include almost all important covering properties; among them are: compactness, countable compactness (more generally, $[\theta,\kappa]$--compactness), the Lindel\"{o}f property (more generally, the $\mu$--Lindel\"{o}f property), paracompactness, metacompactness, countable paracompactness, subparacompactness, submetacompactness (or $\theta$--refinability), the $\sigma$--para--Lindel\"{o}f property and also $\alpha$--boundedness. (See \cite{Bu} and \cite{Steph} for definitions. That these topological properties -- except for the last one -- are closed hereditary and preserved under finite sums of closed subspaces, follows from Theorems 7.1, 7.3 and 7.4 of \cite{Bu}; for $\alpha$--boundedness, this directly follows from its definition. Recall that a Hausdorff space $X$ is called {\em $\alpha$--bounded}, where $\alpha$ is an infinite cardinal, if every subspace of $X$ of cardinality $\leq\alpha$ has compact closure in $X$.)
\end{example}

In Theorem \ref{TTES}, it is worth to know when $\beta X\backslash\lambda_{\mathscr P} X\subseteq\beta X\backslash X$, or equivalently, when $X\subseteq\lambda_{\mathscr P} X$. This is the subject matter of the following lemma.

\begin{lemma}\label{HFH}
Let $X$ be a completely regular space and let ${\mathscr P}$ be a closed hereditary topological property.  Then $X\subseteq\lambda_{\mathscr P} X$ if and only if $X$ is locally--${\mathscr P}$.
\end{lemma}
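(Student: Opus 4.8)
The plan is to prove the two implications separately, directly from the definition of $\lambda_{\mathscr P}X$; the only external input is Lemma~\ref{LKG} together with complete regularity of $X$, so neither direction is hard. The single conceptual point worth flagging is that a \emph{${\mathscr P}$--neighborhood} of a point need only contain an open set around that point and need not itself be open --- this is precisely why mere closed--heredity of ${\mathscr P}$ is enough.

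\emph{$X\subseteq\lambda_{\mathscr P}X$ implies $X$ is locally--${\mathscr P}$.} Fix $x\in X$. By the definition of $\lambda_{\mathscr P}X$ there is some $C\in\mbox{Coz}(X)$ with $\mbox{cl}_XC$ having ${\mathscr P}$ and $x\in\mbox{int}_{\beta X}\mbox{cl}_{\beta X}C$. Then $X\cap\mbox{int}_{\beta X}\mbox{cl}_{\beta X}C$ is an open neighborhood of $x$ in $X$, and it is contained in $X\cap\mbox{cl}_{\beta X}C=\mbox{cl}_XC$ (here one uses that $X$ carries the subspace topology inherited from $\beta X$). Hence $\mbox{cl}_XC$ is a ${\mathscr P}$--neighborhood of $x$ in $X$; since $x$ was arbitrary, $X$ is locally--${\mathscr P}$. (This direction uses only the definition of $\lambda_{\mathscr P}X$, not closed--heredity.)

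\emph{$X$ is locally--${\mathscr P}$ implies $X\subseteq\lambda_{\mathscr P}X$.} Fix $x\in X$, choose a ${\mathscr P}$--neighborhood $N$ of $x$ and an open $U$ in $X$ with $x\in U\subseteq N$, and use complete regularity to pick a continuous $f:X\rightarrow[0,1]$ with $f(x)=0$ and $f|(X\backslash U)\equiv1$. Setting $C=f^{-1}[[0,1/2)]\in\mbox{Coz}(X)$, one has $x\in C$ and $\mbox{cl}_XC\subseteq f^{-1}[[0,1/2]]\subseteq U\subseteq N$, so $\mbox{cl}_XC$ is closed in the ${\mathscr P}$--space $N$ and therefore has ${\mathscr P}$ by closed--heredity. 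Finally Lemma~\ref{LKG} gives $x\in f_\beta^{-1}[[0,1/2)]\subseteq\mbox{int}_{\beta X}\mbox{cl}_{\beta X}C\subseteq\lambda_{\mathscr P}X$, and since $x$ was arbitrary we conclude $X\subseteq\lambda_{\mathscr P}X$.

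I do not expect a genuine obstacle; the only points requiring care are the two short chains of inclusions --- specifically that $X\cap\mbox{cl}_{\beta X}C=\mbox{cl}_XC$, that $f^{-1}[[0,1/2]]$ is closed and contains $C$ hence contains $\mbox{cl}_XC$, and that $\mbox{cl}_XC$ (being closed in $X$ and sitting inside $N$) is closed in $N$ --- together with keeping in mind throughout that the neighborhood witnessing that $X$ is locally--${\mathscr P}$ is permitted to be non--open.
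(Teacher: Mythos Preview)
Your proof is correct and follows essentially the same route as the paper's: both directions use the defining cozero--sets of $\lambda_{\mathscr P}X$, complete regularity to produce a suitable $f$, and Lemma~\ref{LKG} to land in $\mbox{int}_{\beta X}\mbox{cl}_{\beta X}C$. The only cosmetic difference is that the paper invokes the regular--space reformulation of locally--${\mathscr P}$ (an open $U$ with $\mbox{cl}_XU$ having ${\mathscr P}$) while you work directly from an arbitrary ${\mathscr P}$--neighborhood $N$; the arguments are otherwise line--for--line parallel.
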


\begin{proof}
Suppose that $X$ is locally--${\mathscr P}$. Let $x\in X$ and let $U$ be an open neighborhood of $x$ in $X$ such that $\mbox{cl}_XU$ has ${\mathscr P}$. Let $f:X\rightarrow[0,1]$ be continuous with $f(x)=0$ and $f|(X\backslash U)\equiv 1$. Let
\[C=f^{-1}\big[[0,1/2)\big]\in\mbox{Coz}(X).\]
Then $C\subseteq U$ and thus $\mbox{cl}_XC$ has ${\mathscr P}$, as it is closed in $\mbox{cl}_XU$. Using Lemma \ref{LKG} we have
\begin{eqnarray*}
x\in f_\beta^{-1}\big[[0,1/2)\big]\subseteq\mbox{int}_{\beta X}\mbox{cl}_{\beta X}f^{-1}\big[[0,1/2)\big]=\mbox{int}_{\beta X}\mbox{cl}_{\beta X}C\subseteq\lambda_{\mathscr P} X.
\end{eqnarray*}
Therefore $X\subseteq\lambda_{\mathscr P} X$.

For the converse, suppose that $X\subseteq\lambda_{\mathscr P} X$. Let $x\in X$. Then $x\in\lambda_{\mathscr P} X$ and thus $x\in\mbox{int}_{\beta X}\mbox{cl}_{\beta X}C$ for some $C\in\mbox{Coz}(X)$ such that $\mbox{cl}_XC$ has ${\mathscr P}$. Let
\[V=X\cap\mbox{int}_{\beta X}\mbox{cl}_{\beta X}C.\]
Then $V$ is an open neighborhood of $x$ in $X$. Since $V\subseteq\mbox{cl}_XC$, the closure $\mbox{cl}_XV$ has ${\mathscr P}$, as it is closed in $\mbox{cl}_XC$. Therefore $X$ is locally--${\mathscr P}$.
\end{proof}

The following is a special case of Theorem \ref{TTES}.

\begin{theorem}\label{FKJ}
Let $X$ be a locally compact Hausdorff space and let ${\mathscr P}$ be compactness. The following are equivalent:
\begin{itemize}
\item[\rm(1)] $X$ is ${\mathscr P}$--connected.
\item[\rm(2)] $\beta X\backslash X$ is connected.
\end{itemize}
\end{theorem}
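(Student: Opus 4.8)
The plan is to derive Theorem \ref{FKJ} directly from Theorem \ref{TTES} by showing that, under the present hypotheses, $\lambda_{\mathscr P}X$ coincides with $X$ itself. First I would record the routine preliminaries: a locally compact Hausdorff space is completely regular, so $\beta X$ is available and Theorem \ref{TTES} is applicable once we check that compactness is closed hereditary and preserved under finite sums of closed subspaces. Both are immediate, since a closed subspace of a compact space is compact and a finite union of compact subspaces is compact. Hence $X$ is ${\mathscr P}$--connected if and only if $\beta X\backslash\lambda_{\mathscr P}X$ is connected, and it remains only to identify $\lambda_{\mathscr P}X$.

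Next I would verify $X\subseteq\lambda_{\mathscr P}X$. As $X$ is locally compact and compactness is closed hereditary, $X$ is locally--${\mathscr P}$, so Lemma \ref{HFH} yields $X\subseteq\lambda_{\mathscr P}X$. For the reverse inclusion, take any $C\in\mbox{Coz}(X)$ with $\mbox{cl}_XC$ compact, i.e. with ${\mathscr P}$. Being compact, $\mbox{cl}_XC$ is a compact, hence closed, subspace of $\beta X$, so $\mbox{cl}_{\beta X}C=\mbox{cl}_XC\subseteq X$, and therefore $\mbox{int}_{\beta X}\mbox{cl}_{\beta X}C\subseteq X$. Taking the union over all such $C$ gives $\lambda_{\mathscr P}X\subseteq X$. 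Combining the two inclusions, $\lambda_{\mathscr P}X=X$, whence $\beta X\backslash\lambda_{\mathscr P}X=\beta X\backslash X$, and the equivalence of (1) and (2) follows from Theorem \ref{TTES}.

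I do not expect any substantial obstacle here; the statement is essentially a corollary. The only step requiring a moment's care is the identification $\lambda_{\mathscr P}X=X$ — specifically, the observation that compactness of $\mbox{cl}_XC$ forces $\mbox{cl}_{\beta X}C$ to remain inside $X$, which is what collapses $\lambda_{\mathscr P}X$ down to $X$. Everything else is a direct invocation of the machinery already developed in this section.
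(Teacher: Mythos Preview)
Your proposal is correct and follows essentially the same route as the paper: both arguments reduce to showing $\lambda_{\mathscr P}X=X$, obtaining $X\subseteq\lambda_{\mathscr P}X$ from Lemma \ref{HFH} and $\lambda_{\mathscr P}X\subseteq X$ from the observation that compactness of $\mbox{cl}_XC$ forces $\mbox{cl}_{\beta X}C\subseteq X$, then invoking Theorem \ref{TTES}. Your additional remarks verifying complete regularity of $X$ and the hypotheses on ${\mathscr P}$ are welcome bits of bookkeeping the paper leaves implicit.
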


\begin{proof}
Note that $\lambda_{\mathscr P} X\subseteq X$, as if $C\in\mbox{Coz}(X)$ has compact closure, then $\mbox{cl}_{\beta X}C\subseteq\mbox{cl}_XC\subseteq X$. Also, $X\subseteq\lambda_{\mathscr P} X$ by Lemma \ref{HFH}, as $X$ is locally compact. Therefore $\lambda_{\mathscr P} X=X$. The theorem now follows from Theorem \ref{TTES}.
\end{proof}

\begin{example}\label{YJHHJ}
It is known that $\beta\mathbb{R}^n\backslash\mathbb{R}^n$ is connected if and only if $n\geq 2$. (See 6.10 and Exercise 6.L of \cite{GJ}.) Thus, if ${\mathscr P}$ is compactness, then it follows from Theorem \ref{FKJ} that $\mathbb{R}^n$ is ${\mathscr P}$--connected if and only if $n\geq 2$.
\end{example}

\section{Unions of ${\mathscr P}$--connected subspaces}

The union of connected subspaces with non--empty intersection is connected. This section deals with generalizations of this well--known result in the context of ${\mathscr P}$--connectedness.

\begin{lemma}\label{ERD}
Let $X$ be a space and let ${\mathscr P}$ be a closed hereditary topological property. Let $C$ and $D$ be a ${\mathscr P}$--separation for $X$. Then for every closed ${\mathscr P}$--connected subspace $A$ of $X$ either $\mbox{\em cl}_X(A\cap C)$ or $\mbox{\em cl}_X(A\cap D)$ has ${\mathscr P}$.
\end{lemma}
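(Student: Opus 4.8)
The plan is to restrict the given ${\mathscr P}$--separation $C,D$ of $X$ to the closed subspace $A$ and argue that, if neither $\mathrm{cl}_X(A\cap C)$ nor $\mathrm{cl}_X(A\cap D)$ had ${\mathscr P}$, then $A\cap C$ and $A\cap D$ would constitute a ${\mathscr P}$--separation of $A$, contradicting that $A$ is ${\mathscr P}$--connected. First I would note that $A\cap C$ and $A\cap D$ are disjoint cozero--sets of $A$, since cozero--sets are preserved under restriction to subspaces, and that their closures in $A$ coincide with $A\cap\mathrm{cl}_X(A\cap C)$ and $A\cap\mathrm{cl}_X(A\cap D)$ because $A$ is closed in $X$; in fact $\mathrm{cl}_A(A\cap C)=\mathrm{cl}_X(A\cap C)$ outright, again since $A$ is closed. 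So the assumption for contradiction immediately gives that $\mathrm{cl}_A(A\cap C)$ and $\mathrm{cl}_A(A\cap D)$ are both non--${\mathscr P}$.

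Next I would verify the third condition of a ${\mathscr P}$--separation for $A$, namely that the remainder $A\setminus\big((A\cap C)\cup(A\cap D)\big)$ sits inside a cozero--set of $A$ with ${\mathscr P}$ closure. Here I would use the witnessing cozero--set $E\in\mathrm{Coz}(X)$ from the ${\mathscr P}$--separation of $X$: we have $X\setminus(C\cup D)\subseteq E$ and $\mathrm{cl}_XE$ has ${\mathscr P}$. Intersecting with $A$, the remainder of $A$ equals $A\setminus(C\cup D)=A\cap\big(X\setminus(C\cup D)\big)\subseteq A\cap E$, which is a cozero--set of $A$. Its closure in $A$ is $\mathrm{cl}_A(A\cap E)=A\cap\mathrm{cl}_X(A\cap E)\subseteq A\cap\mathrm{cl}_XE=\mathrm{cl}_X(A\cap E)$, a closed subspace of $\mathrm{cl}_XE$; since ${\mathscr P}$ is closed hereditary and $\mathrm{cl}_XE$ has ${\mathscr P}$, this closure has ${\mathscr P}$. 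Thus all three clauses of Definition \ref{RUA} hold for the pair $A\cap C$, $A\cap D$ in $A$.

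Assembling these, the pair $A\cap C$ and $A\cap D$ is a ${\mathscr P}$--separation for $A$, so $A$ is ${\mathscr P}$--disconnected, contradicting the hypothesis that $A$ is ${\mathscr P}$--connected. Therefore at least one of $\mathrm{cl}_X(A\cap C)$, $\mathrm{cl}_X(A\cap D)$ has ${\mathscr P}$, which is the claim.

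I do not expect any serious obstacle; the only point requiring a little care is the bookkeeping on closures in the subspace $A$ versus in $X$, which is why the hypothesis that $A$ is closed in $X$ is used (so that $\mathrm{cl}_A(\cdot)=\mathrm{cl}_X(\cdot)$ for subsets of $A$), together with the closed--hereditariness of ${\mathscr P}$ to transfer ${\mathscr P}$ from $\mathrm{cl}_XE$ down to the closed subspace $\mathrm{cl}_X(A\cap E)$. One should also double-check that $A\cap C$ and $A\cap D$ are genuinely cozero--sets of $A$: if $C=X\setminus\mathrm{Z}(f)$ then $A\cap C=A\setminus\mathrm{Z}(f|A)$, so this is immediate.
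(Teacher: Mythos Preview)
Your proof is correct and follows essentially the same route as the paper's: restrict the separation to $A$, use closedness of $A$ to identify $\mathrm{cl}_A(\cdot)$ with $\mathrm{cl}_X(\cdot)$, and invoke closed--hereditariness of ${\mathscr P}$ on $\mathrm{cl}_XE$ to handle the remainder. One tiny slip: in your chain $A\cap\mathrm{cl}_XE=\mathrm{cl}_X(A\cap E)$ the equality is not valid in general; what you want (and already noted earlier) is simply $\mathrm{cl}_A(A\cap E)=\mathrm{cl}_X(A\cap E)$, which is closed in $\mathrm{cl}_XE$.
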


\begin{proof}
Let $E\in\mbox{Coz}(X)$ be such that $\mbox{cl}_XE$ has ${\mathscr P}$ and $Z=X\backslash(C\cup D)\subseteq E$. Let $A$ be a closed ${\mathscr P}$--connected subspace of $X$. Note that $A\cap C, A\cap D\in\mbox{Coz}(A)$ are disjoint. Also
\[A\backslash\big((A\cap C)\cup (A\cap D)\big)=A\cap Z\subseteq A\cap E\in\mbox{Coz}(A)\]
and $\mbox{cl}_A(A\cap E)$ has ${\mathscr P}$, as $\mbox{cl}_A(A\cap E)=\mbox{cl}_X(A\cap E)$ is closed in $\mbox{cl}_XE$ and the latter has ${\mathscr P}$. Since $A$ is ${\mathscr P}$--connected, either
\[\mbox{cl}_A(A\cap C)=\mbox{cl}_X(A\cap C)\;\;\;\;\mbox{ or }\;\;\;\;\mbox{cl}_A(A\cap D)=\mbox{cl}_X(A\cap D)\]
has ${\mathscr P}$.
\end{proof}

\begin{theorem}\label{HGHG}
Let $X$ be a space and let ${\mathscr P}$ be a closed hereditary topological property preserved under finite sums of closed subspaces. Let
\[X=X_1\cup\cdots\cup X_n,\]
where each $X_1,\ldots,X_n$ is a closed ${\mathscr P}$--connected subspace of $X$. If there exists a ${\mathscr P}$--connected non--${\mathscr P}$ closed subspace $A$ of $X$ with
\[A\subseteq X_1\cap\cdots\cap X_n\]
then $X$ is ${\mathscr P}$--connected.
\end{theorem}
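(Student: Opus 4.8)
The plan is to argue by contradiction, using Lemma \ref{ERD} as the main tool together with the hypothesis that ${\mathscr P}$ is closed hereditary and preserved under finite sums of closed subspaces. Suppose $X$ is ${\mathscr P}$--disconnected and fix a ${\mathscr P}$--separation $C,D$ for $X$; pick $E\in\mbox{Coz}(X)$ with $\mbox{cl}_XE$ having ${\mathscr P}$ and $Z=X\backslash(C\cup D)\subseteq E$. Applying Lemma \ref{ERD} to the closed ${\mathscr P}$--connected subspace $A$, one of $\mbox{cl}_X(A\cap C)$, $\mbox{cl}_X(A\cap D)$ has ${\mathscr P}$; by the symmetry between $C$ and $D$ we may assume $\mbox{cl}_X(A\cap C)$ has ${\mathscr P}$.

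The crucial step is to show that $\mbox{cl}_X(X_i\cap C)$ has ${\mathscr P}$ for \emph{every} $i$, i.e.\ that the common subspace $A$ forces the same ``side'' of the separation to be small inside each $X_i$. Fix $i$. By Lemma \ref{ERD} applied to the closed ${\mathscr P}$--connected subspace $X_i$, either $\mbox{cl}_X(X_i\cap C)$ or $\mbox{cl}_X(X_i\cap D)$ has ${\mathscr P}$. Suppose the latter. Since $A\subseteq X_i$, we have $A\cap D\subseteq X_i\cap D$, so $\mbox{cl}_X(A\cap D)\subseteq\mbox{cl}_X(X_i\cap D)$ has ${\mathscr P}$ by closed heredity; also $A\cap Z\subseteq A\cap E$ and $\mbox{cl}_X(A\cap E)$, being closed in $\mbox{cl}_XE$, has ${\mathscr P}$. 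Then
\[\mbox{cl}_XA\subseteq\mbox{cl}_X(A\cap C)\cup\mbox{cl}_X(A\cap D)\cup\mbox{cl}_X(A\cap E)=B,\]
and $B$ is the finite union of its closed subspaces $\mbox{cl}_X(A\cap C)$, $\mbox{cl}_X(A\cap D)$, $\mbox{cl}_X(A\cap E)$, each of which has ${\mathscr P}$, so $B$ has ${\mathscr P}$; hence $\mbox{cl}_XA=A$, being closed in $B$, has ${\mathscr P}$ — contradicting that $A$ is non--${\mathscr P}$. Therefore $\mbox{cl}_X(X_i\cap C)$ has ${\mathscr P}$.

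To conclude, write $C=X\cap C=(X_1\cap C)\cup\cdots\cup(X_n\cap C)$, using $X=X_1\cup\cdots\cup X_n$, so that $\mbox{cl}_XC\subseteq\mbox{cl}_X(X_1\cap C)\cup\cdots\cup\mbox{cl}_X(X_n\cap C)$, a finite union of closed subspaces of $X$ each having ${\mathscr P}$; hence this union has ${\mathscr P}$, and therefore $\mbox{cl}_XC$, being closed in it, has ${\mathscr P}$. This contradicts the requirement in a ${\mathscr P}$--separation that $\mbox{cl}_XC$ be non--${\mathscr P}$. Thus no ${\mathscr P}$--separation for $X$ exists, i.e.\ $X$ is ${\mathscr P}$--connected.

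I expect the only genuine subtlety to be the middle paragraph: one must exploit that $A$ lies in \emph{all} of the $X_i$ in order to synchronise the dichotomies supplied by Lemma \ref{ERD}, and that $A$ is non--${\mathscr P}$ in order to rule out the ``wrong'' alternative. Everything else is routine bookkeeping with the closed-hereditary and finite-closed-sum properties of ${\mathscr P}$, together with the elementary facts that $\mbox{cl}_XA=A$ since $A$ is closed, that $\mbox{cl}_X$ distributes over finite unions, and that $\mbox{cl}_X(Y\cap C)$ equals $\mbox{cl}_Y(Y\cap C)$ when $Y$ is closed in $X$.
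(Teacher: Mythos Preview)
Your proof is correct and follows essentially the same approach as the paper's: argue by contradiction, apply Lemma \ref{ERD} first to $A$ and then to each $X_i$, use $A\subseteq X_i$ together with the non--${\mathscr P}$ assumption on $A$ to synchronise the dichotomies, and conclude that one of $\mbox{cl}_XC$, $\mbox{cl}_XD$ has ${\mathscr P}$. The only organisational difference is that the paper first deduces once and for all that $\mbox{cl}_X(A\cap C)$ is non--${\mathscr P}$ and then uses closed heredity to force $\mbox{cl}_X(X_i\cap C)$ to be non--${\mathscr P}$ directly, whereas you rerun the ``$A$ would have ${\mathscr P}$'' contradiction inside each $i$; the two arguments are logically equivalent up to the labelling of $C$ and $D$.
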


\begin{proof}
Let $A$ be a ${\mathscr P}$--disconnected non--${\mathscr P}$ closed subspace of $X$ such that $A\subseteq X_1\cap\cdots\cap X_n$. Suppose to the contrary that $X$ is ${\mathscr P}$--disconnected. Let $C$ and $D$ be a ${\mathscr P}$--separation for $X$ and let $E\in\mbox{Coz}(X)$ be such that $\mbox{cl}_XE$ has ${\mathscr P}$ and $Z=X\backslash(C\cup D)\subseteq E$. By Lemma \ref{ERD} either $\mbox{cl}_X(A\cap C)$ or $\mbox{cl}_X(A\cap D)$, say the latter, has ${\mathscr P}$. Note that $A\cap Z$ has ${\mathscr P}$, as it is closed in $\mbox{cl}_XE$. Since
\[A=\mbox{cl}_X(A\cap C)\cup\mbox{cl}_X(A\cap D)\cup(A\cap Z)\]
it follows that $\mbox{cl}_X(A\cap C)$ is non--${\mathscr P}$; as otherwise, $A$ has ${\mathscr P}$, as it is the finite union of its closed ${\mathscr P}$--subspaces. Let $i=1,\ldots,n$ be fixed. Since $X_i$ is ${\mathscr P}$--connected and closed in $X$, by Lemma \ref{ERD}, either $\mbox{cl}_X(X_i\cap C)$ or $\mbox{cl}_X(X_i\cap D)$ has ${\mathscr P}$. But $\mbox{cl}_X(X_i\cap C)$ is non--${\mathscr P}$, as its closed subspace $\mbox{cl}_X(A\cap C)$ is so. Now
\[\mbox{cl}_XD=\mbox{cl}_X(X_1\cap D)\cup\cdots\cup\mbox{cl}_X(X_n\cap D)\]
has ${\mathscr P}$, as it is the finite union of its closed ${\mathscr P}$--subspaces. This contradiction proves the theorem.
\end{proof}

\begin{example}\label{DHJ}
Topological properties ${\mathscr P}$ satisfying the assumption of Theorem \ref{HGHG} are identical to those satisfying the assumption of Theorem \ref{TTES}. Thus the list of such topological properties includes all those introduced in Example \ref{QLL}.
\end{example}

\begin{theorem}\label{TYHG}
Let $X$ be a ${\mathscr Q}$--space, where ${\mathscr Q}$ is a closed hereditary topological property. Let ${\mathscr P}$ be a closed hereditary topological property preserved under countable sums of closed subspaces. Suppose that every ${\mathscr Q}$--space with a dense ${\mathscr P}$--subspace has ${\mathscr P}$. Let
\[X=X_1\cup X_2\cup\cdots,\]
where each $X_1,X_2,\ldots$ is a closed ${\mathscr P}$--connected subspace of $X$. If there exists a ${\mathscr P}$--connected non--${\mathscr P}$ closed subspace $A$ of $X$ with
\[A\subseteq X_1\cap X_2\cap\cdots\]
then $X$ is ${\mathscr P}$--connected.
\end{theorem}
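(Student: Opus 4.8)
The plan is to run the contradiction argument of Theorem~\ref{HGHG} essentially verbatim, and to patch the single place where passing from a finite to a countable union breaks it --- namely the identity ``closure of a union $=$ union of closures'' --- by invoking the hypothesis that a ${\mathscr Q}$--space with a dense ${\mathscr P}$--subspace has ${\mathscr P}$. Note first that ${\mathscr P}$, being preserved under countable sums of closed subspaces, is \emph{a fortiori} preserved under finite sums (repeat one summand), so the lemmas quoted below all apply.

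Suppose, towards a contradiction, that $X$ is ${\mathscr P}$--disconnected, and fix a ${\mathscr P}$--separation $C,D$ of $X$ together with $E\in\mbox{Coz}(X)$ such that $\mbox{cl}_XE$ has ${\mathscr P}$ and $Z=X\backslash(C\cup D)\subseteq E$. Applying Lemma~\ref{ERD} to the closed ${\mathscr P}$--connected subspace $A$, one of $\mbox{cl}_X(A\cap C)$, $\mbox{cl}_X(A\cap D)$ has ${\mathscr P}$; say $\mbox{cl}_X(A\cap D)$ does. Since $A\cap Z$ is closed in $X$ and contained in $\mbox{cl}_XE$, it has ${\mathscr P}$ by closed hereditariness; and from $A=\mbox{cl}_X(A\cap C)\cup\mbox{cl}_X(A\cap D)\cup(A\cap Z)$, a union of three subspaces closed in $A$, together with the fact that $A$ is non--${\mathscr P}$, I conclude that $\mbox{cl}_X(A\cap C)$ is non--${\mathscr P}$.

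Next, for each positive integer $n$, Lemma~\ref{ERD} applied to the closed ${\mathscr P}$--connected subspace $X_n$ shows that $\mbox{cl}_X(X_n\cap C)$ or $\mbox{cl}_X(X_n\cap D)$ has ${\mathscr P}$; but $A\subseteq X_n$ gives $\mbox{cl}_X(A\cap C)\subseteq\mbox{cl}_X(X_n\cap C)$, so the first of these is non--${\mathscr P}$ (closed hereditariness again), and hence $\mbox{cl}_X(X_n\cap D)$ has ${\mathscr P}$ for \emph{every} $n$. Now set $F=\bigcup_n\mbox{cl}_X(X_n\cap D)$. Each $\mbox{cl}_X(X_n\cap D)$ is closed in $X$, hence closed in $F$, and has ${\mathscr P}$; since ${\mathscr P}$ is preserved under countable sums of closed subspaces, $F$ has ${\mathscr P}$. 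On the other hand $D=\bigcup_n(X_n\cap D)\subseteq F\subseteq\mbox{cl}_XD$, so $F$ is a dense ${\mathscr P}$--subspace of $\mbox{cl}_XD$. As $X$ is a ${\mathscr Q}$--space and ${\mathscr Q}$ is closed hereditary, $\mbox{cl}_XD$ is a ${\mathscr Q}$--space with a dense ${\mathscr P}$--subspace, hence has ${\mathscr P}$ by hypothesis --- contradicting that $C,D$ is a ${\mathscr P}$--separation. This contradiction proves that $X$ is ${\mathscr P}$--connected.

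The only genuinely new point, and the one I expect to require care, is exactly this last step: in the countable setting $\mbox{cl}_XD$ need not equal $\bigcup_n\mbox{cl}_X(X_n\cap D)$, so one cannot conclude directly (as in Theorem~\ref{HGHG}) that $\mbox{cl}_XD$ is a countable closed sum of ${\mathscr P}$--subspaces; one only obtains a \emph{dense} ${\mathscr P}$--subspace of it, and the hypothesis on ${\mathscr Q}$--spaces is precisely what converts this into ``$\mbox{cl}_XD$ has ${\mathscr P}$''. Everything else is a routine transcription of the finite case.
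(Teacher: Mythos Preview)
Your proof is correct and follows essentially the same approach as the paper's: suppose a ${\mathscr P}$--separation $C,D$ exists, use Lemma~\ref{ERD} on $A$ and then on each $X_n$ to force $\mbox{cl}_X(X_n\cap D)$ to have ${\mathscr P}$ for all $n$, take the countable union (which has ${\mathscr P}$), observe it is dense in the ${\mathscr Q}$--space $\mbox{cl}_XD$, and invoke the hypothesis on ${\mathscr Q}$--spaces to reach the contradiction. Your write-up is in fact more explicit than the paper's, which abbreviates the middle portion with ``arguing as in the proof of Theorem~\ref{HGHG}'', and your remark that countable preservation implies finite preservation (needed for the decomposition of $A$) is a detail the paper leaves implicit.
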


\begin{proof}
Let $A$ be a ${\mathscr P}$--connected non--${\mathscr P}$ closed subspace of $X$ such that $A\subseteq X_1\cap X_2\cap\cdots$. Suppose to the contrary that $X$ is ${\mathscr P}$--disconnected. Let $C$ and $D$ be a ${\mathscr P}$--separation for $X$. By Lemma \ref{ERD} either $\mbox{cl}_X(A\cap C)$ or $\mbox{cl}_X(A\cap D)$, say the latter, has ${\mathscr P}$. Arguing as in the proof of Theorem \ref{HGHG} it follows that $\mbox{cl}_X(X_i\cap D)$ has ${\mathscr P}$ for each $i=1,2,\ldots$. Then
\[H=\mbox{cl}_X(X_1\cap D)\cup\mbox{cl}_X(X_2\cap D)\cup\cdots\]
has ${\mathscr P}$, as it is the countable union of its closed ${\mathscr P}$--subspaces. Note that $\mbox{cl}_XD$ has ${\mathscr Q}$, as it is closed in $X$. Since $\mbox{cl}_XD$ contains $H$ as a dense subspace, it then follows that $\mbox{cl}_XD$ has ${\mathscr P}$. This is a contradiction.
\end{proof}

\begin{example}\label{HJ}
Let ${\mathscr Q}$ be paracompactness and let ${\mathscr P}$ be the Lindel\"{o}f property. Then ${\mathscr Q}$ and ${\mathscr P}$ are both closed hereditary (see Theorems 3.8.4 and 5.1.28 of \cite{E}) and ${\mathscr P}$ is trivially preserved under countable sums of closed subspaces. Also, every ${\mathscr Q}$--space with a dense ${\mathscr P}$--subspace has ${\mathscr P}$. (See Theorem 5.1.25 of \cite{E}.) Thus, the pair ${\mathscr Q}$ and ${\mathscr P}$ satisfies the requirement of Theorem \ref{TYHG}.
\end{example}

\section{Images of ${\mathscr P}$--connected spaces}

The continuous image of any connected spaces is connected. This section deals with generalizations of this well--known result in the context of ${\mathscr P}$--connectedness.

\begin{theorem}\label{YG}
Let ${\mathscr P}$ be a closed hereditary topological property both invariant and inverse invariant under perfect continuous surjections. Then the perfect continuous image of any ${\mathscr P}$--connected space is ${\mathscr P}$--connected.
\end{theorem}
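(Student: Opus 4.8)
The plan is to work with the Stone--\v{C}ech compactification and reduce the statement to the classical fact that connectedness is preserved under continuous surjections. Suppose $f:X\to Y$ is a perfect continuous surjection and $X$ is ${\mathscr P}$--connected; we must show $Y$ is ${\mathscr P}$--connected. First I would replace ${\mathscr P}$--connectedness by the equivalent condition from Theorem \ref{TTES}, assuming we are in the completely regular case: $X$ is ${\mathscr P}$--connected iff $\beta X\backslash\lambda_{\mathscr P}X$ is connected. (If the theorem is meant for general spaces, one argues directly with ${\mathscr P}$--separations and pulls them back along $f$; I sketch the completely regular route, which is the natural one given the machinery developed.) Since $f$ is perfect and continuous, the extension $f_\beta:\beta X\to\beta Y$ restricts to a perfect continuous surjection $\beta X\backslash X\to\beta Y\backslash Y$, and in fact $f_\beta^{-1}(Y)=X$; these are standard facts about perfect maps and $\beta$.

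The key step is to show that $f_\beta$ carries $\lambda_{\mathscr P}X$ into $\lambda_{\mathscr P}Y$ and, conversely, that $f_\beta^{-1}(\lambda_{\mathscr P}Y)\subseteq\lambda_{\mathscr P}X$, so that $f_\beta$ maps $\beta X\backslash\lambda_{\mathscr P}X$ \emph{onto} $\beta Y\backslash\lambda_{\mathscr P}Y$. For the inclusion $f_\beta(\lambda_{\mathscr P}X)\subseteq\lambda_{\mathscr P}Y$: if $C\in\mathrm{Coz}(X)$ has $\mathrm{cl}_XC$ with ${\mathscr P}$, then because $f$ is perfect, $f(\mathrm{cl}_XC)$ is closed in $Y$ and equals $\mathrm{cl}_Y f(C)$, and it has ${\mathscr P}$ since ${\mathscr P}$ is invariant under perfect continuous surjections and $f|_{\mathrm{cl}_XC}$ is such a map onto $\mathrm{cl}_Yf(C)$. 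One then needs $f(C)$ to lie in a cozero--set of $Y$ with ${\mathscr P}$ closure; taking a cozero--set $C'\supseteq f(C)$ inside a suitable zero--set neighborhood does the job, using Lemma \ref{BA27}. For the reverse inclusion, if $D\in\mathrm{Coz}(Y)$ has $\mathrm{cl}_YD$ with ${\mathscr P}$, then $f^{-1}(\mathrm{cl}_YD)$ is closed in $X$ and has ${\mathscr P}$ by inverse invariance, and $f^{-1}(D)\in\mathrm{Coz}(X)$; again Lemma \ref{BA27} and Lemma \ref{LKG} let one compare interiors of closures in $\beta X$ and $\beta Y$ via $f_\beta$. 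Combining these two facts with $f_\beta^{-1}(\mathrm{cl}_{\beta Y}D)\supseteq\mathrm{cl}_{\beta X}f^{-1}(D)$ and the continuity of $f_\beta$ yields $f_\beta\big(\beta X\backslash\lambda_{\mathscr P}X\big)=\beta Y\backslash\lambda_{\mathscr P}Y$.

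Once that is established, the conclusion is immediate: $\beta X\backslash\lambda_{\mathscr P}X$ is connected by hypothesis (via Theorem \ref{TTES}), $f_\beta$ restricted to it is continuous, and the continuous image of a connected space is connected, so $\beta Y\backslash\lambda_{\mathscr P}Y$ is connected; applying Theorem \ref{TTES} in the other direction gives that $Y$ is ${\mathscr P}$--connected. I expect the main obstacle to be the two cozero/zero--set bookkeeping steps showing $f_\beta$ respects $\lambda_{\mathscr P}$ in both directions --- specifically, passing from ``$\mathrm{cl}_XC$ has ${\mathscr P}$'' to an honest cozero--set of $Y$ (resp.\ $X$) with ${\mathscr P}$ closure containing the image (resp.\ preimage), where one must invoke complete separation, perfectness of $f$, and both halves of the (inverse) invariance hypothesis; the topological part afterward is routine. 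A secondary point to verify carefully is that $f_\beta$ actually maps into $\beta Y\backslash\lambda_{\mathscr P}Y$ and not merely into $\beta Y\backslash Y$ --- i.e.\ that no point of $\beta X\backslash\lambda_{\mathscr P}X$ is sent into $\lambda_{\mathscr P}Y$ --- which is exactly the content of the inclusion $f_\beta^{-1}(\lambda_{\mathscr P}Y)\subseteq\lambda_{\mathscr P}X$.
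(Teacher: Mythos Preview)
Your one-sentence aside --- ``one argues directly with ${\mathscr P}$--separations and pulls them back along $f$'' --- is exactly the paper's proof, and it is both simpler and strictly more general than the route you chose to develop. The paper assumes $Y$ is ${\mathscr P}$--disconnected, takes a ${\mathscr P}$--separation $C,D$ of $Y$ with witnessing $E\in\mbox{Coz}(Y)$, and checks that $f^{-1}[C],f^{-1}[D]$ (with witness $f^{-1}[E]$) is a ${\mathscr P}$--separation of $X$: disjointness is clear; that $\mbox{cl}_X f^{-1}[D]$ is non--${\mathscr P}$ uses invariance of ${\mathscr P}$ under perfect surjections (if it had ${\mathscr P}$, so would its perfect image $f[\mbox{cl}_X f^{-1}[D]]$, which is closed and contains $D$, forcing $\mbox{cl}_Y D$ to have ${\mathscr P}$); that $\mbox{cl}_X f^{-1}[E]$ has ${\mathscr P}$ uses inverse invariance applied to $f^{-1}[\mbox{cl}_Y E]$ together with closed heredity. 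No complete regularity, no $\beta X$, no finite-sums hypothesis.

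Your $\beta X$ route, by contrast, invokes Theorem~\ref{TTES}, which requires complete regularity \emph{and} that ${\mathscr P}$ be preserved under finite sums of closed subspaces --- neither is assumed in Theorem~\ref{YG}. More seriously, the forward inclusion $f_\beta(\lambda_{\mathscr P}X)\subseteq\lambda_{\mathscr P}Y$ is not established by your sketch: from $t\in\mbox{int}_{\beta X}\mbox{cl}_{\beta X}C$ with $\mbox{cl}_X C$ having ${\mathscr P}$ you correctly get that $f(\mbox{cl}_X C)$ is a closed ${\mathscr P}$--subspace of $Y$, but you give no reason why $f_\beta(t)$ should lie in $\mbox{int}_{\beta Y}\mbox{cl}_{\beta Y}D$ for some cozero $D$ of $Y$ with ${\mathscr P}$ closure. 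The map $f_\beta$ is not open, and ``taking a cozero--set $C'\supseteq f(C)$'' with ${\mathscr P}$ closure is precisely the point that needs proof, not an afterthought. Observe that in the pseudocompactness analogue, Theorem~\ref{YDG}, the corresponding step is handled via hyper-realness of $f$, which is where \emph{openness} of $f$ enters --- a hypothesis absent here. So the detour is not merely longer; as written it has a genuine gap at the inclusion you label as routine, while the direct pull-back argument you set aside is complete and matches the stated hypotheses exactly.
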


\begin{proof}
Let $f:X\rightarrow Y$ be a perfect continuous surjection. Suppose that $Y$ is ${\mathscr P}$--disconnected and let $C$ and $D$ be a ${\mathscr P}$--separation for $Y$. We show that $X$ is ${\mathscr P}$--disconnected by showing that the pair $f^{-1}[C]$ and $f^{-1}[D]$ forms a ${\mathscr P}$--separation for $X$. Let $E\in\mbox{Coz}(Y)$ be such that $\mbox{cl}_YE$ has ${\mathscr P}$ and $Y\backslash(C\cup D)\subseteq E$. Note that $f^{-1}[C],f^{-1}[D]\in\mbox{Coz}(X)$ and that they are disjoint. Suppose to the contrary that the closure in $X$ of either $f^{-1}[C]$ or $f^{-1}[D]$, say the latter, has ${\mathscr P}$. Since \[f|\mbox{cl}_Xf^{-1}[D]:\mbox{cl}_Xf^{-1}[D]\rightarrow f\big[\mbox{cl}_Xf^{-1}[D]\big]\]
is perfect continuous and surjective, and ${\mathscr P}$ is invariant under perfect continuous surjections, $f[\mbox{cl}_Xf^{-1}[D]]$ has ${\mathscr P}$. Note that since $f$ is surjective we have $D=f[f^{-1}[D]]$ and since $f$ is closed, $f[\mbox{cl}_Xf^{-1}[D]]$ is closed in $Y$. This implies that $\mbox{cl}_YD$ has ${\mathscr P}$, as it is contained in $f[\mbox{cl}_Xf^{-1}[D]]$ as a closed subspace. This contradiction shows that $\mbox{cl}_Xf^{-1}[D]$ is non--${\mathscr P}$. Next, note that  \[X\backslash\big(f^{-1}[C]\cup f^{-1}[D]\big)=f^{-1}\big[Y\backslash(C\cup D)\big]\subseteq f^{-1}[E]\]
and $f^{-1}[E]\in\mbox{Coz}(X)$. Thus, to complete the proof we need to show that $\mbox{cl}_Xf^{-1}[E]$ has ${\mathscr P}$. Note that
\[f|f^{-1}[\mbox{cl}_YE]:f^{-1}[\mbox{cl}_YE]\rightarrow\mbox{cl}_YE\]
is perfect continuous and it is surjective. (The latter is because $f[f^{-1}[\mbox{cl}_YE]]$ is closed in $Y$, as $f$ is closed, and it contains $E=f[f^{-1}[E]]$.) Since $\mbox{cl}_YE$ has ${\mathscr P}$ and ${\mathscr P}$ is inverse invariant under perfect continuous surjections, $f^{-1}[\mbox{cl}_YE]$ has ${\mathscr P}$. But then $\mbox{cl}_Xf^{-1}[E]$ has ${\mathscr P}$, as it is contained in $f^{-1}[\mbox{cl}_YE]$ as a closed subspace.
\end{proof}

\begin{example}\label{JJHJ}
Topological properties ${\mathscr P}$ introduced in Example \ref{QLL} all satisfy the assumption of Theorem \ref{YG}. The fact that they are invariant under perfect continuous surjections follows from Theorems 5.1 and 5.5 of \cite{Bu} and Exercise 5.2.G of \cite{E}. Also, by (modification of) Theorem 3.7.24 and Exercise 5.2.G of \cite{E} and Theorem 5.9 of \cite{Bu} it follows that these topological properties are all inverse invariant under perfect continuous surjections. For the case of $\alpha$--boundedness, note that for a perfect continuous surjective $f:X\rightarrow Y$, when $Y$ is $\alpha$--bounded, if a subspace $A$ of $X$ has cardinality $\leq\alpha$, then $f[A]$ has cardinality $\leq\alpha$, and thus $\mbox{cl}_Y f[A]$ is compact. But since
\[A\subseteq f^{-1}\big[f[A]\big]\subseteq f^{-1}\big[\mbox{cl}_Y f[A]\big]\]
and the latter is compact (note that in any perfect continuous mapping the inverse image of each compact subspaces of the codomain is compact; see Theorem 3.7.2 of \cite{E}), its closed subspace $\mbox{cl}_X A$ also is compact, that is, $X$ is $\alpha$--bounded.
\end{example}

\section{Spaces with dense ${\mathscr P}$--connected subspaces}\label{UOOJ}

Any space containing a dense connected subspaces is connected. This does not hold in general in the context of ${\mathscr P}$--connectedness. We show this in the following example.

\begin{example}\label{OOJ}
Let ${\mathscr P}$ be compactness. Let $X=\mathbb{R}\times[0,1]$, considered as a subspace of $\mathbb{R}^2$, and let $A=\mathbb{R}\times(0,1)$. Then $A$ is dense in $X$. Note that $A$ is homeomorphic to $\mathbb{R}^2$ and is thus ${\mathscr P}$--connected by Example \ref{YJHHJ}. However, $X$ is ${\mathscr P}$--disconnected, as the pair $(-\infty,0)\times[0,1]$ and $(1,\infty)\times[0,1]$ constitutes a ${\mathscr P}$--separation for it.
\end{example}

\section{Products of ${\mathscr P}$--connected spaces}\label{UOOJ}

Products of connected spaces are connected. We do not know to what extent this remains true in the more general context of ${\mathscr P}$--connectedness. We formally state this below as an open question.

\begin{question}\label{GFF}
Under what conditions is the product of ${\mathscr P}$--connected spaces ${\mathscr P}$--connected?
\end{question}

\section{Connectedness modulo pseudocompactness}\label{SGFF}

This section deals with ${\mathscr P}$--connectedness in the case when ${\mathscr P}$ is pseudocompactness.

\subsubsection*{\bf{Characterization of completely regular pseudocompactness--connected spaces}} In this part we prove a result, analogous to Theorem \ref{TTES}, characterizing completely regular pseudocompactness--connected spaces. But before we proceed, we need to determine $\lambda_{\mathscr P}X$ in this case. This is actually done in \cite{Ko4} (see also \cite{Ko7}); we give the details here for completeness of results.

For any completely regular space $X$ we denote by $\upsilon X$ the Hewitt realcompactification of $X$. One may assume that $\upsilon X\subseteq\beta X$.

The following result is due to  A.W. Hager and D.G. Johnson in \cite{HJ}; a direct proof may be found in \cite{C}. (See also Theorem 11.24 of \cite{We}.)

\begin{lemma}[Hager--Johnson \cite{HJ}]\label{A}
Let $U$ be an open subspace of the completely regular space $X$. If $\mbox{\em cl}_{\upsilon X} U$ is compact then $\mbox{\em cl}_X U$ is pseudocompact.
\end{lemma}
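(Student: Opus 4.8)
The plan is to argue by contradiction. Suppose $\mathrm{cl}_X U$ is not pseudocompact; I will manufacture a function $f\in C(X)$ that is unbounded on $U$, extend it to $\upsilon X$, and contradict the compactness of $\mathrm{cl}_{\upsilon X}U$ (on which every continuous real-valued function must be bounded). So the whole content is to convert the failure of pseudocompactness of $\mathrm{cl}_X U$ into the existence of some $f\in C(X)$ which is unbounded already on the subset $U$.

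First I would use a standard characterization: a Tychonoff space which is not pseudocompact admits an infinite family $\{V_n:n\in\mathbb{N}\}$ of non-empty open subsets which is locally finite in that space. Applying this to $\mathrm{cl}_X U$ gives such a family of non-empty open subsets $V_n$ of $\mathrm{cl}_X U$, locally finite in $\mathrm{cl}_X U$. (If one prefers to avoid citing this, it can be built directly: take an unbounded $g\in C(\mathrm{cl}_X U)$ and set $V_n=g^{-1}(I_n)$ for a sequence of pairwise disjoint bounded open intervals $I_n$ with $\inf I_n\to\infty$, chosen inductively so that each $V_n$ is non-empty; local finiteness in $\mathrm{cl}_X U$ is immediate because the $I_n$ recede to infinity.) Now put $O_n=V_n\cap U$. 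Each $O_n$ is non-empty, since $U$ is dense in $\mathrm{cl}_X U$ and $V_n$ is a non-empty open subset of $\mathrm{cl}_X U$; and each $O_n$ is open in $X$, since $U$ is open in $X$ and $V_n$ is open in $\mathrm{cl}_X U$.

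The \emph{main obstacle} is the next claim: the family $\{O_n\}$ is locally finite in the whole of $X$, not merely in $U$. For a point outside $\mathrm{cl}_X U$ this is clear, because $\mathrm{cl}_X U$ is closed in $X$ and contains every $O_n$, so the open set $X\setminus\mathrm{cl}_X U$ is a neighbourhood missing all of them. For a point of $\mathrm{cl}_X U$ one pulls back to $X$ an open neighbourhood in $\mathrm{cl}_X U$ witnessing local finiteness of $\{V_n\}$ and uses that $O_n\subseteq V_n\subseteq\mathrm{cl}_X U$ to conclude that the pulled-back neighbourhood meets only finitely many $O_n$. This is precisely where the hypothesis that $U$ be open is consumed, and the reason we must take the closure before demanding local finiteness: the family $\{V_n\}$ on its own need only be locally finite in $\mathrm{cl}_X U$, and $\{O_n\}$ need not be locally finite in $U$ alone.

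Finally, for each $n$ pick $p_n\in O_n$ and, by complete regularity of $X$, a continuous $f_n:X\to[0,1]$ with $f_n(p_n)=1$ and $f_n$ vanishing off $O_n$, and set $f=\sum_{n\ge1} n f_n$. Local finiteness of $\{O_n\}$ makes this a locally finite sum, so $f\in C(X)$ with $f\ge 0$, and since all summands are non-negative, $f(p_n)\ge n f_n(p_n)=n$; hence $f$ is unbounded on $\{p_n:n\ge1\}\subseteq U$. The function $f$ extends to a continuous real-valued $f_\upsilon\in C(\upsilon X)$ by the defining property of the Hewitt realcompactification, and $f_\upsilon$ agrees with $f$ on $U\subseteq\mathrm{cl}_{\upsilon X}U$, so $f_\upsilon$ is unbounded on $\mathrm{cl}_{\upsilon X}U$. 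This contradicts the assumed compactness of $\mathrm{cl}_{\upsilon X}U$. Therefore $\mathrm{cl}_X U$ is pseudocompact. The only non-routine point is the local-finiteness verification above; everything else is bookkeeping with complete regularity and with the universal property of $\upsilon X$.
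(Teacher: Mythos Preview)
Your argument is correct. The contrapositive approach---building an infinite locally finite family $\{O_n\}$ of non-empty open subsets of $X$ inside $U$, summing bump functions to get an unbounded $f\in C(X)$, and then extending to $\upsilon X$ to contradict compactness of $\mbox{cl}_{\upsilon X}U$---is sound, and the step you flag as the main obstacle (promoting local finiteness from $\mbox{cl}_X U$ to all of $X$) is handled correctly: for points of $\mbox{cl}_X U$ you use that $O_n\subseteq\mbox{cl}_X U$, so any $X$-open set $M$ with $M\cap\mbox{cl}_X U$ meeting only finitely many $V_n$ automatically meets only finitely many $O_n$; for points outside $\mbox{cl}_X U$ the complement of $\mbox{cl}_X U$ works. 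The openness of $U$ is indeed used exactly where you say, to make each $O_n$ open in $X$ so that the bump functions $f_n$ can be manufactured by complete regularity of $X$ itself.

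As for comparison with the paper: there is nothing to compare. The paper does not supply a proof of this lemma; it merely attributes the result to Hager and Johnson and points to Comfort's paper and Weir's book for a direct argument. Your proof is essentially the standard direct argument one finds in those references (Comfort's proof proceeds along the same lines: failure of pseudocompactness yields a discrete or locally finite sequence of open sets on which one builds an unbounded continuous function on $X$, then extends to $\upsilon X$). So your write-up fills in exactly what the paper chose to outsource.
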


Observe, in the proof of the following, that realcompactness is closed hereditary, a space having a pseudocompact dense subspace is pseudocompact, and that realcompact pseudocompact spaces are compact; see Theorems 3.11.1 and 3.11.4 of \cite{E}.

\begin{lemma}\label{HGA}
Let $U$ be an open subspace of the completely regular space $X$. Then $\mbox{\em cl}_{\beta X} U\subseteq\upsilon X$ if and only if $\mbox{\em cl}_X U$ is pseudocompact.
\end{lemma}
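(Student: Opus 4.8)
The plan is to prove both implications, using Lemma~\ref{A} for one direction and a soft argument about $\upsilon X$ together with the pseudocompact/realcompact dichotomy for the other. For the ``if'' direction, suppose $\mathrm{cl}_X U$ is pseudocompact. Since $\upsilon X$ is realcompact and $\mathrm{cl}_{\upsilon X}U$ is closed in $\upsilon X$, the set $\mathrm{cl}_{\upsilon X}U$ is realcompact (realcompactness being closed hereditary). Now $U$ is dense in $\mathrm{cl}_{\upsilon X}U$, and $U$ is dense in $\mathrm{cl}_X U$, which is pseudocompact; hence $U$ itself is pseudocompact (a dense subspace of a pseudocompact space is pseudocompact — a space is pseudocompact iff it has a dense pseudocompact subspace), so $\mathrm{cl}_{\upsilon X}U$, having the pseudocompact dense subspace $U$, is pseudocompact. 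A realcompact pseudocompact space is compact, so $\mathrm{cl}_{\upsilon X}U$ is compact, hence closed in $\beta X$; since it contains $U$ it contains $\mathrm{cl}_{\beta X}U$, and therefore $\mathrm{cl}_{\beta X}U = \mathrm{cl}_{\upsilon X}U \subseteq \upsilon X$.

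For the ``only if'' direction, suppose $\mathrm{cl}_{\beta X}U \subseteq \upsilon X$. Then $\mathrm{cl}_{\beta X}U$ is a closed subset of $\beta X$ contained in $\upsilon X$, so it is a closed subset of $\upsilon X$; but it is also compact (closed in $\beta X$). In particular $\mathrm{cl}_{\upsilon X}U = \mathrm{cl}_{\beta X}U$ is compact. By Lemma~\ref{A}, $\mathrm{cl}_X U$ is pseudocompact, which is exactly what we want.

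The only mild subtlety — and the step I would be most careful about — is the bookkeeping between the three closures $\mathrm{cl}_X U$, $\mathrm{cl}_{\upsilon X}U$ and $\mathrm{cl}_{\beta X}U$, and the passage between ``$U$ pseudocompact'' and ``$\mathrm{cl}_X U$ pseudocompact'': these are linked precisely because $U$ is dense in each of its closures, so each closure has a dense pseudocompact subspace exactly when any one of them does. Everything else is a direct invocation of standard facts (Theorems 3.11.1 and 3.11.4 of \cite{E}) together with Lemma~\ref{A}, so no real obstacle is expected; the argument is essentially a repackaging of the remark preceding the statement.
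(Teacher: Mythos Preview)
Your ``only if'' direction is fine and matches the paper. The ``if'' direction, however, contains a genuine error: you assert that $U$ is pseudocompact because it is dense in the pseudocompact space $\mathrm{cl}_X U$, justifying this with ``a dense subspace of a pseudocompact space is pseudocompact.'' That statement is false. For instance, $(0,1)$ is dense in the compact (hence pseudocompact) space $[0,1]$, yet $(0,1)$ is not pseudocompact. The equivalence you quote --- a space is pseudocompact iff it has a dense pseudocompact subspace --- only gives the implication in the other direction (dense pseudocompact subspace $\Rightarrow$ ambient space pseudocompact), which is exactly what the paper invokes but not what you need for your step.

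The repair is simple and is precisely what the paper does: work with $A=\mathrm{cl}_X U$ rather than $U$. Since $X$ is dense in $\upsilon X$, the set $A$ is dense in $\mathrm{cl}_{\upsilon X}A=\mathrm{cl}_{\upsilon X}U$, and $A$ is pseudocompact by hypothesis. Hence $\mathrm{cl}_{\upsilon X}A$ is pseudocompact (dense pseudocompact subspace) and realcompact (closed in $\upsilon X$), so compact; thus $\mathrm{cl}_{\beta X}U\subseteq\mathrm{cl}_{\beta X}A\subseteq\mathrm{cl}_{\upsilon X}A\subseteq\upsilon X$. Once you make this substitution, your argument coincides with the paper's.
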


\begin{proof}
The first half follows from Lemma \ref{A}. For the second half, note that if $A=\mbox{cl}_X U$ is pseudocompact then so is its closure $\mbox{cl}_{\upsilon X}A$. But  $\mbox{cl}_{\upsilon X} A$ is also realcompact, as it is closed in $\upsilon X$, and thus it is compact. Therefore $\mbox{cl}_{\beta X} A\subseteq\mbox{cl}_{\upsilon X} A$.
\end{proof}

\begin{lemma}\label{PTF}
Let ${\mathscr P}$ be pseudocompactness. Let $X$ be a completely regular space. Then
\[\lambda_{\mathscr P}X=\mbox{\em int}_{\beta X}\upsilon X.\]
\end{lemma}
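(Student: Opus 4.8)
The plan is to prove the two inclusions separately, using the characterization of pseudocompactness via closures in $\upsilon X$ supplied by Lemma \ref{HGA}.

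For the inclusion $\lambda_{\mathscr P}X\subseteq\mbox{int}_{\beta X}\upsilon X$, I would take a basic piece $\mbox{int}_{\beta X}\mbox{cl}_{\beta X}C$ of $\lambda_{\mathscr P}X$, where $C\in\mbox{Coz}(X)$ has pseudocompact closure. By Lemma \ref{HGA} applied to the open set $C$, we have $\mbox{cl}_{\beta X}C\subseteq\upsilon X$. Hence $\mbox{int}_{\beta X}\mbox{cl}_{\beta X}C$ is an open subset of $\beta X$ contained in $\upsilon X$, so it lies in $\mbox{int}_{\beta X}\upsilon X$. Taking the union over all such $C$ gives $\lambda_{\mathscr P}X\subseteq\mbox{int}_{\beta X}\upsilon X$.

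For the reverse inclusion $\mbox{int}_{\beta X}\upsilon X\subseteq\lambda_{\mathscr P}X$, fix a point $p\in\mbox{int}_{\beta X}\upsilon X$. Since $\beta X$ is compact Hausdorff, hence regular, I can choose an open neighborhood of $p$ in $\beta X$ whose $\beta X$-closure is contained in $\mbox{int}_{\beta X}\upsilon X$, and then, using complete regularity of $\beta X$, produce a continuous $g:\beta X\to[0,1]$ with $g(p)=0$ and $g$ identically $1$ off that neighborhood; then $g_\beta^{-1}\big[[0,1/2)\big]$ — or rather the set $g^{-1}\big[[0,1/2)\big]$ restricted appropriately — has $\beta X$-closure inside $\mbox{int}_{\beta X}\upsilon X\subseteq\upsilon X$. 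Concretely, let $C=X\cap g^{-1}\big[[0,1/2)\big]\in\mbox{Coz}(X)$. Then $\mbox{cl}_{\beta X}C\subseteq g^{-1}\big[[0,1/2]\big]\subseteq\upsilon X$, so by Lemma \ref{HGA} the closure $\mbox{cl}_XC$ is pseudocompact, i.e. $C$ is an admissible cozero-set in the definition of $\lambda_{\mathscr P}X$. Finally, by Lemma \ref{LKG} we get $p\in g_\beta^{-1}\big[[0,1/2)\big]\subseteq\mbox{int}_{\beta X}\mbox{cl}_{\beta X}g^{-1}\big[[0,1/2)\big]=\mbox{int}_{\beta X}\mbox{cl}_{\beta X}C\subseteq\lambda_{\mathscr P}X$, where the equality uses that $g^{-1}\big[[0,1/2)\big]$ and $C$ have the same closure in $\beta X$ since $C$ is dense in it. This shows $\mbox{int}_{\beta X}\upsilon X\subseteq\lambda_{\mathscr P}X$ and completes the proof.

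I expect the only delicate point to be the bookkeeping in the second inclusion: one must be careful that the cozero-set $C$ extracted from $g$ is a cozero-set of $X$ (not merely of $\beta X$) with pseudocompact $X$-closure, and that Lemma \ref{LKG} is being applied to the right map and the right radius so that the interior of the $\beta X$-closure genuinely contains $p$. Everything else is a routine application of regularity of $\beta X$, Lemma \ref{HGA}, and Lemma \ref{LKG}; no new ideas beyond those already developed in Section 3 are needed.
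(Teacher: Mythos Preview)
Your proposal is correct and follows essentially the same route as the paper: both inclusions are handled via Lemma~\ref{HGA}, and for the reverse inclusion a Urysohn function on $\beta X$ is used together with Lemma~\ref{LKG} to exhibit an admissible cozero-set $C$. The only cosmetic differences are that the paper separates $p$ directly from the closed set $\beta X\setminus\mbox{int}_{\beta X}\upsilon X$ (your preliminary regularity step is unnecessary), and that writing $g_\beta$ is slightly off since $g$ already lives on $\beta X$; what you really mean is $(g|X)_\beta=g$, which is exactly the parenthetical remark the paper makes.
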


\begin{proof}
If $C\in\mbox{Coz}(X)$ has pseudocompact closure in $X$ then $\mbox{cl}_{\beta X} C\subseteq\upsilon X$, by Lemma \ref{HGA}, and then $\mbox{int}_{\beta X}\mbox{cl}_{\beta X} C\subseteq\mbox{int}_{\beta X}\upsilon X$. Thus $\lambda_{\mathscr P}X\subseteq\mbox{int}_{\beta X}\upsilon X$.

For the reverse inclusion, let $t\in\mbox{int}_{\beta X}\upsilon X$. Let $f:\beta X\rightarrow[0,1]$ be continuous with
\[f(t)=0\;\;\;\;\mbox{ and }\;\;\;\;f|(\beta X\backslash\mbox{int}_{\beta X}\upsilon X)\equiv 1.\]
Then
\[C=X\cap f^{-1}\big[[0,1/2)\big]\in\mbox{Coz}(X)\]
and $t\in\mbox{int}_{\beta X}\mbox{cl}_{\beta X} C$ by Lemma \ref{LKG}. (Note that $(f|X)_\beta=f$, as they coincide on the dense subspace $X$ of $\beta X$.) Also, $\mbox{cl}_X C$ is pseudocompact by Lemma \ref{HGA}, as
\[\mbox{cl}_{\beta X}C\subseteq f^{-1}\big[[0,1/2]\big]\subseteq\upsilon X.\]
Therefore $\mbox{int}_{\beta X}\upsilon X\subseteq\lambda_{\mathscr P}X$.
\end{proof}

The following is the main result of this part.

\begin{theorem}\label{UJU}
Let $X$ be a completely regular space. The following are equivalent:
\begin{itemize}
\item[\rm(1)] $X$ is pseudocompactness--connected.
\item[\rm(2)] $\mbox{\em cl}_{\beta X}(\beta X\backslash\upsilon X)$ is connected.
\end{itemize}
\end{theorem}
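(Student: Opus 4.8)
The plan is to imitate the proof of Theorem~\ref{TTES}, using Lemma~\ref{PTF} to identify $\lambda_{\mathscr P}X$ in this case and Lemma~\ref{HGA} as a substitute for Lemma~\ref{B}. First I would reformulate condition~(2): with ${\mathscr P}$ pseudocompactness, Lemma~\ref{PTF} gives $\lambda_{\mathscr P}X=\mbox{int}_{\beta X}\upsilon X$, and since $\lambda_{\mathscr P}X$ is open in $\beta X$,
\[\beta X\backslash\lambda_{\mathscr P}X=\beta X\backslash\mbox{int}_{\beta X}\upsilon X=\mbox{cl}_{\beta X}(\beta X\backslash\upsilon X).\]
Hence (2) is equivalent to the assertion that $\beta X\backslash\lambda_{\mathscr P}X$ is connected, which is exactly the form of condition~(2) in Theorem~\ref{TTES}.

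Since pseudocompactness is not closed hereditary, Theorem~\ref{TTES} cannot be quoted verbatim. The point to observe is that in the proof of Theorem~\ref{TTES} the hypotheses on ${\mathscr P}$ enter only through Lemma~\ref{B}, and Lemma~\ref{B} is applied there only to sets of the form $\mbox{cl}_XU$ with $U$ a cozero--set of $X$: once in $(1)\Rightarrow(2)$, to conclude that $\mbox{cl}_XE$ has ${\mathscr P}$ from $\mbox{cl}_{\beta X}E\subseteq\lambda_{\mathscr P}X$ for $E\in\mbox{Coz}(X)$, and once in $(2)\Rightarrow(1)$, to see that $\mbox{cl}_{\beta X}U\subseteq\lambda_{\mathscr P}X$ is impossible when $\mbox{cl}_XU$ is non--${\mathscr P}$ and $U$ comes from a ${\mathscr P}$--separation. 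For cozero--sets $U$, Lemma~\ref{HGA} supplies precisely the required equivalence, namely that $\mbox{cl}_{\beta X}U\subseteq\lambda_{\mathscr P}X\subseteq\upsilon X$ holds if and only if $\mbox{cl}_XU$ is pseudocompact. The other ingredients used in the proof of Theorem~\ref{TTES}, namely Lemmas~\ref{LKG}, \ref{BA27}, \ref{9}, \ref{10} and \ref{j1}, impose no restriction on ${\mathscr P}$ and so are available unchanged.

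With this in hand I would run the two halves of the argument. For $(1)\Rightarrow(2)$: assuming $\beta X\backslash\lambda_{\mathscr P}X$ is disconnected, pick a separation $G,H$, use normality of $\beta X$ to get $f:\beta X\to[0,1]$ with $f|G\equiv0$ and $f|H\equiv1$, and set $C=X\cap f^{-1}[[0,1/2)]$, $D=X\cap f^{-1}[(1/2,1]]$; if $\mbox{cl}_XC$ (say) were pseudocompact then $C$ would contribute to $\lambda_{\mathscr P}X$ and Lemma~\ref{LKG} would force the non--empty set $G$ into $\lambda_{\mathscr P}X$, a contradiction, so $\mbox{cl}_XC$ and $\mbox{cl}_XD$ are non--pseudocompact; finally $X\backslash(C\cup D)\subseteq E:=X\cap f^{-1}((1/3,2/3))\in\mbox{Coz}(X)$ with $\mbox{cl}_{\beta X}E\subseteq f^{-1}([1/3,2/3])\subseteq\beta X\backslash(G\cup H)=\lambda_{\mathscr P}X\subseteq\upsilon X$, so $\mbox{cl}_XE$ is pseudocompact by Lemma~\ref{HGA}, and $C,D$ is a ${\mathscr P}$--separation of $X$. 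For $(2)\Rightarrow(1)$: given a ${\mathscr P}$--separation $U,V$ of $X$ with $Z=X\backslash(U\cup V)\in\mbox{Z}(X)$ contained in a cozero--set with pseudocompact closure, Lemma~\ref{BA27} yields $\mbox{cl}_{\beta X}Z\subseteq\lambda_{\mathscr P}X$, hence $\beta X\backslash\lambda_{\mathscr P}X=(\mbox{cl}_{\beta X}U\backslash\lambda_{\mathscr P}X)\cup(\mbox{cl}_{\beta X}V\backslash\lambda_{\mathscr P}X)$; since $\mbox{bd}_XU\subseteq Z$, Lemma~\ref{j1} identifies $A=\mbox{cl}_{\beta X}U\backslash\lambda_{\mathscr P}X=\mbox{Ex}_XU\backslash\lambda_{\mathscr P}X$, and similarly $B=\mbox{cl}_{\beta X}V\backslash\lambda_{\mathscr P}X=\mbox{Ex}_XV\backslash\lambda_{\mathscr P}X$; $A\neq\emptyset$, because otherwise $\mbox{cl}_{\beta X}U\subseteq\lambda_{\mathscr P}X\subseteq\upsilon X$ would make $\mbox{cl}_XU$ pseudocompact by Lemma~\ref{HGA}, contradicting that $\mbox{cl}_XU$ is non--${\mathscr P}$, and likewise $B\neq\emptyset$; and $A\cap B\subseteq\mbox{Ex}_XU\cap\mbox{Ex}_XV=\mbox{Ex}_X(U\cap V)=\emptyset$ by Lemma~\ref{9}, so $A,B$ is a separation of the (closed) subspace $\beta X\backslash\lambda_{\mathscr P}X$ of $\beta X$.

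The only delicate point — and hence the main (if modest) obstacle — is the bookkeeping that confirms Lemma~\ref{B} is invoked in the proof of Theorem~\ref{TTES} only on closures of cozero--sets, so that Lemma~\ref{HGA} is an adequate replacement; everything else is a faithful transcription. One clean way to present this is to first isolate the following: if ${\mathscr P}$ is a topological property such that, for every $U\in\mbox{Coz}(X)$, the closure $\mbox{cl}_XU$ has ${\mathscr P}$ whenever $\mbox{cl}_{\beta X}U\subseteq\lambda_{\mathscr P}X$, then $X$ is ${\mathscr P}$--connected if and only if $\beta X\backslash\lambda_{\mathscr P}X$ is connected; pseudocompactness meets this hypothesis by Lemma~\ref{HGA}, and Theorem~\ref{UJU} follows together with the reformulation of~(2) above.
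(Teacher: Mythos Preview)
Your proposal is correct and follows essentially the same route as the paper: both observe that in the proof of Theorem~\ref{TTES} the hypotheses on ${\mathscr P}$ are used only through Lemma~\ref{B}, and only for closures of open (cozero) sets, where pseudocompactness behaves well enough. The paper phrases this as ``Lemma~\ref{B} holds when $A$ is regular closed,'' justified via regular-closed-hereditariness of pseudocompactness together with preservation under finite closed sums, whereas you bypass this and appeal to Lemma~\ref{HGA} directly; both justifications are valid and the rest of your argument matches the paper's.
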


\begin{proof}
Observe that pseudocompactness is hereditary with respect to regular closed subspaces (see Problem 3.10.F of \cite{E}) and it is preserved under finite sums of closed subspaces. Thus, Lemma \ref{B} holds true if ${\mathscr P}$ is pseudocompactness and $A$ is a regular closed subspace of $X$. The proof is now analogous to the one given for Theorem \ref{TTES}.
\end{proof}

\subsubsection*{\bf{Images of pseudocompactness--connected spaces}} In this part we study images of pseudocompactness--connected spaces under certain classes of mappings.

Let $X$ and $Y$ be completely regular spaces. A continuous mapping $f:X\rightarrow Y$ is said to be {\em hyper--real} if $f_\beta[\beta X\backslash\upsilon X]\subseteq\beta Y\backslash\upsilon Y$. Hyper--real mappings are defined by R.L. Blair in the unpublished manuscript \cite{Bl} and provide the appropriate tool for the
study of preservation of realcompactness and inverse preservation of pseudocompactness. It is known that every perfect open continuous surjection between completely regular spaces is hyper--real. (See Corollaries 15.14 and 17.19 of \cite{We}.)

Recall that the Hewitt realcompactification of a completely regular space $X$ may be expressed as the union of all cozero--sets of $\beta X$ containing $X$.

The following is a counterpart for Theorem \ref{YG}.

\begin{theorem}\label{YDG}
Every perfect open continuous image of a completely regular pseudocompactness--connected space is pseudocompactness--connected.
\end{theorem}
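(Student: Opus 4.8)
The plan is to reduce everything to the characterization in Theorem \ref{UJU}. Write $f\colon X\to Y$ for a perfect open continuous surjection, with $X$ completely regular and pseudocompactness--connected, and let $f_\beta\colon\beta X\to\beta Y$ be the Stone extension of $f$. The first step is to observe that $Y$ is itself completely regular, so that $\beta Y$, $\upsilon Y$ and Theorem \ref{UJU} are available for $Y$: for a continuous $g\colon X\to[0,1]$ the map $y\mapsto\sup\{g(x):x\in f^{-1}(y)\}$ is continuous (its superlevel sets are images under the open map $f$ of open sets of $X$, and its sublevel sets are complements of images under the closed map $f$ of closed sets of $X$), and feeding this construction a function separating a compact fibre $f^{-1}(y_0)$ from the disjoint closed set $f^{-1}[F]$ (where $F$ is closed in $Y$ with $y_0\notin F$) yields a continuous real function on $Y$ separating $y_0$ from $F$; since $Y$ is plainly $T_1$, it is completely regular.

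Next, since $f$ is a perfect open continuous surjection between completely regular spaces, it is hyper--real, i.e. $f_\beta[\beta X\backslash\upsilon X]\subseteq\beta Y\backslash\upsilon Y$, equivalently $f_\beta^{-1}[\upsilon Y]\subseteq\upsilon X$. On the other hand, by the functoriality of the Hewitt realcompactification (the continuous map $X\to Y\subseteq\upsilon Y$ into the realcompact space $\upsilon Y$ extends over $\upsilon X$, and this extension must agree with $f_\beta|\upsilon X$ on the dense subspace $X$) we have $f_\beta[\upsilon X]\subseteq\upsilon Y$, that is, $\upsilon X\subseteq f_\beta^{-1}[\upsilon Y]$. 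Hence $f_\beta^{-1}[\upsilon Y]=\upsilon X$, and therefore $f_\beta^{-1}[\beta Y\backslash\upsilon Y]=\beta X\backslash\upsilon X$. Because $f$ is onto, $f_\beta$ is a continuous surjection of the compact space $\beta X$ onto the compact space $\beta Y$ (its image is compact, hence closed in $\beta Y$, and contains the dense set $Y$), so $f_\beta[\beta X\backslash\upsilon X]=f_\beta\big[f_\beta^{-1}[\beta Y\backslash\upsilon Y]\big]=\beta Y\backslash\upsilon Y$.

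Now put $T=\mbox{cl}_{\beta X}(\beta X\backslash\upsilon X)$. The image $f_\beta[T]$ is compact, hence closed in $\beta Y$; it contains $f_\beta[\beta X\backslash\upsilon X]=\beta Y\backslash\upsilon Y$, and by continuity of $f_\beta$ it is contained in $\mbox{cl}_{\beta Y}f_\beta[\beta X\backslash\upsilon X]=\mbox{cl}_{\beta Y}(\beta Y\backslash\upsilon Y)$; consequently $f_\beta[T]=\mbox{cl}_{\beta Y}(\beta Y\backslash\upsilon Y)$. By Theorem \ref{UJU} applied to $X$, the set $T$ is connected, so its continuous image $\mbox{cl}_{\beta Y}(\beta Y\backslash\upsilon Y)$ is connected; by Theorem \ref{UJU} applied to $Y$, the space $Y$ is pseudocompactness--connected, as desired.

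The one genuine obstacle is the equality $f_\beta^{-1}[\upsilon Y]=\upsilon X$: the inclusion ``$\subseteq$'' is exactly the hyper--realness of $f$, which we are handed, while ``$\supseteq$'' is the standard but indispensable functoriality of $\upsilon$; everything after that is soft point--set topology (surjectivity of $f_\beta$, closedness of compact images, routine manipulation of closures). I also expect the preliminary check that $Y$ is completely regular to be necessary rather than cosmetic, since Theorem \ref{UJU} is stated only for completely regular spaces; note in particular that one cannot circumvent this by imitating the proof of Theorem \ref{YG}, because pseudocompactness is not closed hereditary.
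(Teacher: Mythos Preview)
Your proof is correct and follows essentially the same strategy as the paper's: both establish that $Y$ is completely regular, prove $f_\beta^{-1}[\beta Y\backslash\upsilon Y]=\beta X\backslash\upsilon X$ using hyper--realness for one inclusion, deduce $f_\beta\big[\mbox{cl}_{\beta X}(\beta X\backslash\upsilon X)\big]=\mbox{cl}_{\beta Y}(\beta Y\backslash\upsilon Y)$, and finish with Theorem \ref{UJU}. The only cosmetic differences are that the paper cites Engelking for the invariance of complete regularity (rather than sketching it) and obtains the inclusion $\upsilon X\subseteq f_\beta^{-1}[\upsilon Y]$ via the zero--set description of $\beta X\backslash\upsilon X$ instead of invoking functoriality of $\upsilon$ directly.
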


\begin{proof}
Let $X$ be a completely regular pseudocompactness--connected space and let $f:X\rightarrow Y$ be a perfect open continuous surjection. Note that complete regularity is invariant under perfect open continuous surjections (see page 512 of \cite{E}); thus $Y$ is completely regular. Observe that if $Z\in\mbox{Z}(\beta Y)$ and $Z\cap Y=\emptyset$ then $f_\beta^{-1}[Z]\cap X=\emptyset$, as
\[f_\beta^{-1}[Z]\cap X\subseteq f_\beta^{-1}[Z]\cap f_\beta^{-1}[Y].\]
Therefore
\begin{eqnarray*}
f_\beta^{-1}[\beta Y\backslash\upsilon Y]&=&f_\beta^{-1}\Big[\bigcup\big\{Z\in\mbox{Z}(\beta Y):Z\cap Y=\emptyset\big\}\Big]\\&=&\bigcup\big\{f_\beta^{-1}[Z]:Z\in \mbox{Z}(\beta Y)\mbox{ and }Z\cap Y=\emptyset\big\}\\&\subseteq&\bigcup\big\{S\in\mbox{Z}(\beta X):S\cap X=\emptyset\big\}=\beta X\backslash\upsilon X.
\end{eqnarray*}
On the other hand, since $f$ is hyper--real, we have
\[\beta X\backslash\upsilon X\subseteq f_\beta^{-1}\big[f_\beta[\beta X\backslash\upsilon X]\big]\subseteq f_\beta^{-1}[\beta Y\backslash\upsilon Y].\]
Thus $f_\beta^{-1}[\beta Y\backslash\upsilon Y]=\beta X\backslash\upsilon X$. We have
\[f_\beta\big[\mbox{cl}_{\beta X}(\beta X\backslash\upsilon X)\big]\subseteq\mbox{cl}_{\beta Y}\big(f_\beta[\beta X\backslash\upsilon X]\big)=\mbox{cl}_{\beta Y}\big(f_\beta \big[f_\beta^{-1}[\beta Y\backslash\upsilon Y]\big]\big)\subseteq\mbox{cl}_{\beta Y}(\beta Y\backslash\upsilon Y).\]
Also, since $f_\beta$ is surjective (as $f$ is so) we have
\[\beta Y\backslash\upsilon Y=f_\beta \big[f_\beta^{-1}[\beta Y\backslash\upsilon Y]\big]=f_\beta[\beta X\backslash\upsilon X]\subseteq f_\beta\big[\mbox{cl}_{\beta X}(\beta X\backslash\upsilon X)\big]\]
and therefore
\[\mbox{cl}_{\beta Y}(\beta Y\backslash\upsilon Y)\subseteq f_\beta\big[\mbox{cl}_{\beta X}(\beta X\backslash\upsilon X)\big].\]
That is
\[f_\beta\big[\mbox{cl}_{\beta X}(\beta X\backslash\upsilon X)\big]=\mbox{cl}_{\beta Y}(\beta Y\backslash\upsilon Y).\]
Since $X$ is pseudocompactness--connected, $\mbox{cl}_{\beta X}(\beta X\backslash\upsilon X)$ is connected, by Theorem \ref{UJU}, and thus, so is its continuous image $\mbox{cl}_{\beta Y}(\beta Y\backslash\upsilon Y)$. From the latter, it then follows from Theorem \ref{UJU} that $Y$ is pseudocompactness--connected.
\end{proof}

\bibliographystyle{amsplain}

\end{document}